\newtheorem{thm}{Theorem}
\newtheorem{lem}{Lemma}
\newtheorem{cor}{Corollary}
\theoremstyle{definition}
\newtheorem{rem}{Remark}
\newtheorem{ex}{Example}
\newtheorem*{problem*}{Problem}
\newtheorem*{assumption*}{Assumption}
\newtheorem*{exs*}{Examples}
\newtheorem*{ag*}{Assumptions and Goal}
\newtheorem*{ag2*}{Assumption}
\begin{document}

\begin{frontmatter}

\title{Non-ergodic inference for stationary-increment harmonizable stable processes}
%\tnotetext[label0]{This is only an example}

\author{Ly Viet Hoang}
\ead{ly.hoang@uni-ulm.de}
\author{Evgeny Spodarev}
\ead{evgeny.spodarev@uni-ulm.de}
\address{Ulm University}

\journal{}

\begin{abstract}
We consider the class of stationary-increment harmonizable stable processes with infinite control measure, which most notably includes real harmonizable fractional stable motions.
We give conditions for the integrability of the paths of such processes with respect to a finite, absolutely continuous measure and derive the distributional characteristics of the path integral with respect to said measure. 
The convolution of the path of a stationary-increment harmonizable stable process with a suitable measure yields a real stationary harmonizable stable process with finite control measure. 
This allows us to construct consistent estimators for the index of stability as well as the kernel function in the integral representation of a stationary increment harmonizable stable process (up to a constant factor). 
For real harmonizable fractional stable motions consistent estimators for the index of stability and its Hurst parameter are given. These are computed directly from the periodogram frequency estimates of the smoothed process. 
\end{abstract}

\begin{keyword}
%% keywords here, in the form: keyword \sep keyword

{Fourier analysis}
\sep{fractional stable motion}
\sep{frequency estimation}
\sep{Hurst parameter}
\sep{harmonizable process}
\sep{non-ergodic process}
\sep{non-ergodic statistics}
\sep{periodogram}
\sep{spectral density estimation}
\sep{stable process}
\sep{process with stationary increments}

\end{keyword}

\end{frontmatter}

%%
%% Start line numbering here if you want
%%
% \linenumbers

\section{Introduction}

A stochastic process or random field $X=\{X(t):t\in T\}$ is called \emph{self-similar} with index $H$ or $H$-self-similar  if it is invariant in distribution under scaling of time and space.
Specifically, the processes $\{X(at):t\in T\}$ and $\{a^HX(t):t\in T\}$ are equal in distribution, meaning that all their finite dimensional distributions are equal.
Furthermore, $X$ has \emph{stationary increments} if for all $h\in T$ the processes $\{X(t+h)-X(h):t\in T\}$ and $\{X(t)-X(0):t\in T\}$ have identical finite-dimensional distributions. 
A classical example of a self-similar stochastic process with stationary increments is the fractional Brownian motion, a process widely used for modeling phenomena in finance, telecommunications or natural sciences such as hydrology since self-similar objects (or fractals)  are abundant in nature  \cite{brownian,kolmogorovfbm, lamperti1962semi, fbm, mandelbrotwallis1,mandelbrotwallis2,mandelbrotwallis3}.
	
Fractional Brownian motions are characterized by their \emph{Hurst parameter} or \emph{exponent} $H\in(0,1)$ and generalize the classical Brownian motion or Wiener process ($H=0.5$).
In fact, the roughness of a fractional Brownian motion's path, its memory as well as the dependency between its increments are determined by the Hurst parameter.
Moreover, fractional Brownian motions are self-similar with index $H$ \cite{selfsimilarprocesses}.
Furthermore, practical problems such as modeling of networks show that besides self-similarity heavy tailed distributions are of crucial importance as well \cite{fbm2, fbm3}. Hence, the extension to stable distributions is natural. 

A fractional Brownian motion admits both a moving average as well as a harmonizable representation \cite[Chapter 7.2]{gennady}. 
The generalization of its moving average representation to the $\alpha$-stable regime with $0<\alpha<2$ yields the class of linear fractional stable motions (LFSMs), whereas the extension of the harmonizable representation results in the class of harmonizable fractional stable motions (HFSMs) \cite{cambanismaejima,samorodnitskytaqquLFSM}, see also \cite{maejima} and \cite[Chapter 7.4, 7.7]{gennady} for an overview. 
It is important to emphasize that the class of LFSMs and the class of HFSMs are disjoint for $0<\alpha<2$. Most notably, HFSMs are non-ergodic \cite{cambanis1}, 
hence statistical inference based on empirical averages is not possible.

Statistical inference of fractional Brownian motions has been studied extensively in \cite{dahlhaus,foxtaqqu, mandelbrottaqqu}, see  \cite{beran2017statistics} for an overview. 
A selection of works on the statistical inference for LFSMs is given by \cite{ayache3,ayache1,podolskij3,podolskij2,podolskij,pipirasstoevtaqqu}, whereas HFSMs have not received as much attention in this regard due to their non-ergodicity. 
Only in recent years a handful of theoretical results on HFSMs have been published. 
Most notably, \cite{oconnorpodolskij} presents the asymptotic theory for quadratic variations of HFSMs, \cite{ayache2,ayachexiao,ayachexiao2} show path properties and a series representation for HFSMs, whereas \cite{bierme} considers the more general class of operator scaling harmonizable random fields. 

None of the above works on HFSMs consider point estimation of the stability index and Hurst parameter. Solely the preprint \cite{ayache} proposes a framework for the simultaneous estimation of the index of stability and Hurst parameter based on wavelet transformations of the path of a HFSM, from which independent symmetric $\alpha$-stable random variables are obtained. Their scale paramaters are closely related to the index of stablity and Hurst parameter. 
Although appealing in theory, this method relies on knowledge of the whole path of a HFSM on the real line since the required wavelets are oscillating functions at their tails, which is unfeasible from a practical point of view. 

Throughout this paper we consider $\alpha\in(0,2)$.
We consider the broader class of stationary-increment harmonizable processes $X=\{X(t):t\in\mathbb{R}\}$ with integral representation 
\begin{align*}
	X(t)=Re\left(\int_{\mathbb{R}}\left(e^{itx}-1\right)\Psi(x)M_\alpha(dx)\right),\quad t\in\mathbb{R},
\end{align*}
where $M_\alpha$ is a complex isotropic symmetric $\alpha$-stable random measure with Lebesgue control measure.
The spectral density $\Psi\in L^{\alpha}(\mathbb{R},\min(1,\vert x\vert^\alpha)dx)$ characterizes the process uniquely. 
For the special choice $\Psi(x)=\vert x\vert^{-H-1/\alpha}$ the process $X$ is a real HFSM. 
In this paper, we give consistent estimators for the index of stability $\alpha$ and the kernel function $\Psi$ (up to a constant factor). 
In case of real HFSMs, consistent estimators for $\alpha$ as well as the Hurst parameter $H$ are obtained.
Our method is motivated by \cite{ayache} but relies only on suitable convolutions of a single path of $X$ making it effectively a smoothing procedure. 
There are only minimal constraints on the smoothing kernel (the so-called mollifier). 
Thus, choosing a mollifier with exponentially decaying tails is possible, ensuring that our method is highly practical. 

The paper is structured as follows. 
In Section \ref{section: prelim}, we  give basic notation and definitions on $\alpha$-stable laws. 
Furthermore, complex $S\alpha S$ random measures and processes are introduced.
The class of stationary real harmonizable stable processes with finite control measure and its properties are discussed, and the challenges of statistical inference on these processes are highlighted. 
We also briefly introduce the periodogram frequency and spectral density estimation for stationary real harmonizable stable processes from \cite{viet2}.
The section concludes with posing the problem of estimating the index of stability $\alpha$ and spectral density $\Psi$ for harmonizable stable processes with infinite control measure, in particular for real harmonizable fractional stable motions.

We consider the mollified or smoothed version of a stationary-increment harmonizable stable process in Section \ref{section: mollifier}. 
Conditions on the smoothing kernel for a well-defined smoothing procedure are given. In fact, the smoothing kernel can be chosen such that the resulting smoothed process is stationary real harmonizable $S\alpha S$ process with finite control measure. 

Section \ref{section: statistical inference} focuses on the statistical inference on stationary-increment harmonizable stable processes as well as real harmonizable fractional stable motions. 
We first consider the more general class of stationary-increment harmonizable stable processes in Section \ref{section: stat incr proc}. We give a strongly consistent estimator for the index of stability $\alpha$ based on two differently smoothed versions of the same path. Kernel density estimation on the frequencies in each mollified process, respectively, as well as linear regression on the logarithm of the ratio of the two kernel density estimators lead to the assessment of $\alpha$. Furthermore, a consistent estimator for the spectral density $\Psi$ up to a constant factor is given.

The special case of real harmonizable fractional stable motions is then considered in Section \ref{section: fractionalmotion}. 
We give a class of exponentially decaying smoothing kernels yielding a mollified process which is again stationary real harmonizable $S\alpha S$. 
The absolute $p$-th power (where $p\geq 1$ coincides with the rate of decay of the smoothing kernel at the origin) of the underlying random frequencies in the LePage series representation can be shown to be i.i.d. Gamma distributed. 
This allows us to compute the index of stability $\alpha$ and the Hurst parameter $H$ directly from a combination of periodogram frequency estimation and consistent inference on the parameters of a Gamma distribution.  

Lastly, we give an extensive numerical analysis in a simulation study in Section \ref{section: numerical}
and conclude our work with the discussion in Section \ref{section: conclusion}.

\section{Preliminaries}
\label{section: prelim}

The following notation is used throughout this work. 
For any complex number $z\in\mathbb{C}$ we denote its real and imaginary part by $Re(z)=z^{(1)}$ and $Im(z)=z^{(2)}$, respectively. 
The notation $\vert\cdot\vert$ is both used for the absolute value on $\mathbb{R}$ as well as on $\mathbb{C}$, where $\vert z\vert=\left((z^{(1)})^2+(z^{(2)})^2\right)^{1/2}$ for $z\in\mathbb{C}$. 

Let $(E,\mathcal{E},\mu)$ be a measurable space. We define the space $L^p(E,\mu)$ with $0< p <\infty$, i.e. the space $p$-integrable functions with respect to a measure $\mu$ on some measurable space $E$, by
\begin{align*}
	L^p(E,\mu)=\left\{u:E\rightarrow\mathbb{C}: \int_E\vert u(x)\vert^p \mu(dx)<\infty\right\}.
\end{align*}
We write $\Vert u\Vert_{L^p(E,\mu)}=\left(\int_E \vert u(x)\vert^p\mu(dx)\right)^{1/p}$. 
Furthermore, we denote the space of $p$-integrable functions on $\mathbb{R}$ with respect to the Lebesgue measure on $\mathbb{R}$ simply by $L^p(\mathbb{R})$ and write $\Vert u\Vert_p=\left(\int_\mathbb{R}\vert u(x)\vert^pdx\right)^{1/p}$.
The notation $\Vert\cdot\Vert_\infty$ stands for the uniform norm. 

Denote by $\mathcal{L}^0(\Omega)$ the space of real-valued random variables on the probability space $(\Omega,\mathcal{F},P)$. 
Additionally, let $\mathcal{L}_c^0(\Omega)=\{X=X^{(1)}+iX^{(2)}: X^{(1)},X^{(2)}\in \mathcal{L}^0(\Omega)\}$ be the space of complex-valued random variables. 
We write $X\stackrel{d}{=}Y$ if the random elements $X$ and $Y$ are equal in distribution. 
In particular, for random processes (or fields) this  means equality of their finite dimensional distributions. 

A random variable $X\in\mathcal{L}^0(\Omega)$ is \emph{symmetric $\alpha$-stable} ($S\alpha S$) with \emph{index of stability} $\alpha\in(0,2]$ and \emph{scale parameter} $\sigma>0$, i.e. $X\sim S\alpha S(\sigma)$, if its characteristic function is given by 
\begin{align*}
	\mathbb{E}\left[e^{isX}\right]=\exp\left\{-\sigma^\alpha\vert s\vert^\alpha\right\}.
\end{align*}
Note that in the case $\alpha=2$ the random variable $X$ is Gaussian with mean $0$ and variance $2\sigma^2$. 
Additionally, a $\mathbb{R}^n$-valued random vector $\bm{X}=\left(X_1,\dots,X_n\right)$ is $S\alpha S$ if its joint-characteristic function is of the form
\begin{align*}
	\mathbb{E}\left[\exp\left\{i(\bm{s},\bm{X})\right\}\right]=\exp\left\{-\int_{S^{n-1}}\left\vert(\bm\theta,\bm{s})\right\vert^\alpha\Gamma(d\bm{\theta})\right\},
\end{align*}
where $S^{n-1}$ is the Euclidean unit sphere in $\mathbb{R}^n$ and $(\cdot,\cdot)$ denotes the scalar product of two vectors in $\mathbb{R}^n$.
The measure $\Gamma$ on $S^{n-1}$ is the \emph{spectral measure} of $\bm{X}$. This measure is unique, finite and symmetric for $0<\alpha<2$ \cite[Theorem 2.4.3]{gennady}.  
A random variable $X\in \mathcal{L}_c^0(\Omega)$ is said to be $S\alpha S$ if $\left(X^{(1)},X^{(2)}\right)=\left(Re(X),Im(X)\right)$ is a  $S\alpha S$ random vector.

Consider the measurable spaces $(E,\mathcal{E})$ and $(S^1,\mathcal{B}(S^1))$, where $\mathcal{B}(S^1)$ is the Borel-$\sigma$-algebra on the unit circle $S^1$.
For a measure $k$ on the product space $(E\times S^1,\mathcal{E}\times\mathcal{B}(S^1))$, define the space $\mathcal{E}_0=\left\{A\in\mathcal{E}:k(A\times S^1)<\infty\right\}$.
A \emph{complex $S\alpha S$ random measure} on $(E,\mathcal{E})$ is $\sigma$-additive, complex-valued set function $M_\alpha:\mathcal{E}_0\rightarrow \mathcal{L}_c^0(\Omega)$, mapping into the space of complex random variables, such that $M_\alpha$ is independently scattered, i.e. $M_\alpha(A)$ and $M_\alpha(B)$ are independent for $A,B\in\mathcal{E}_0$ disjoint, and $M_\alpha(A)$ is a complex $S\alpha S$ random variable for all $A\in\mathcal{E}_0$. 
The $S\alpha S$ random vector $(M_\alpha^{(1)}(A),M_\alpha^{(2)}(A))$ has spectral measure $\Gamma_A=k(A\times\cdot)$ on $(S^1,\mathcal{B}(S^1))$. 
The measure $k$ is called \emph{circular control measure}, whereas the measure $m=k(\cdot\times S^1)$ on $(E,\mathcal{E}_0)$ is referred to as \emph{control measure} of $M_\alpha$ \cite[Definition 6.1.2]{gennady}.

It holds that integration with respect to a $S\alpha S$ random measure $M_\alpha$ of the form $I(u)=\int_E u(x)M_\alpha(dx)$ is well-defined for $u\in L^\alpha(E,m)$ \cite[Chapter 6.2]{gennady}.
Setting $(E,\mathcal{E})=(\mathbb{R},\mathcal{B}(\mathbb{R}))$, where $\mathcal{B}(\mathbb{R})$ is the Borel $\sigma$-algebra on $\mathbb{R}$, allows us to define complex-valued $S\alpha S$ random processes.
For any $t\in\mathbb{R}$ let $h_t(\cdot)$ be a complex-valued function in $L^\alpha(\mathbb{R},m)$. 
Then, the complex-valued $S\alpha S$ process $X=\{X(t):t\in\mathbb{R}\}$ is defined by its integral representation 
\begin{align}
	\label{complexfield}
	X(t)=\int_{\mathbb{R}}h_t(x)M_\alpha(dx),\quad t\in\mathbb{R}, 
\end{align}
where $M_\alpha$ is a complex $S\alpha S$ random measure.

In particular, the case of complex  \emph{isotropic} $S\alpha S$ measures is of interest. 
The random measure $M_\alpha$ is isotropic if $e^{i\phi}M_\alpha(A)\stackrel{d}{=}M_\alpha(A)$ holds for any $\phi\in[0,2\pi)$ and $A\in\mathcal{E}_0$. 
Furthermore, it can be shown that a complex $S\alpha S$ random measure $M_\alpha$ is isotropic if and only if its circular control measure is of the form $k=m\cdot\gamma$, where $\gamma$ is the uniform probability measure on $S^1$ \cite[Example 6.1.6]{gennady}.
Consequently, the measure $M_\alpha$ and therefore the process $X$ are uniquely characterized by control measure $m$. 

\subsection{Stationary real harmonizable $S\alpha S$ processes with finite control measure}

The stochastic processes $X=\{X(t):t\in\mathbb{R}\}$ with
\begin{align}
	\label{SRH}
	X(t)=Re\left(\int_\mathbb{R}e^{itx}M_\alpha(dx)\right),
\end{align}
where $M_\alpha$ is a complex isotropic $S\alpha S$ random measure on $(\mathbb{R},\mathcal{B})$ with finite control measure $m$, 
is called a \emph{stationary real harmonizable $S\alpha S$ process}. 
The isotropy of the random measure $M_\alpha$ is a sufficient and necessary condition for the stationarity of the process $X$ \cite[Theorem 6.5.1]{gennady}, hence the control measure $m$ uniquely determines $M_\alpha$. 
In particular, the distribution of the process $X$ is fully characterized by $m$. 
Indeed, it holds that the finite-dimensional characteristic function of $X$ for all $n\in\mathbb{N}$,
$s_1,\dots,s_n\in\mathbb{R}$ and $t_1,\dots,t_n\in\mathbb{R}$ is given by 
\begin{align}
	\label{charfn}
	\mathbb{E}\left[\exp\left\{i\sum_{i=1}^ns_iX(t_i)\right\}\right]=\exp\left\{-\lambda_\alpha\int_\mathbb{R}\left\vert\sum_{j,k=1}^ns_js_k\cos\left(\left(t_k-t_j\right)x\right)\right\vert^{\alpha/2}m(dx)\right\}
\end{align}
with constant
$\lambda_\alpha=\frac{1}{2\pi}\int_0^{2\pi}\vert\cos\left(x\right)\vert^\alpha dx$ \cite[Proposition 6.6.3]{gennady}.

Assume that the control measure $m$ is absolutely continuous with respect to the Lebesgue measure on $\mathbb{R}$, i.e. it holds that $m(dx)=f(x)dx$. We refer to the Radon-Nikodym derivative $f$ as the \emph{spectral density} of the measure $M_\alpha$ (or equivalently of the process $X$). 
Clearly, under this assumption the spectral density uniquely characterizes the process $X$ as can be seen from the finite-dimensional characteristic function of $X$ in Equation \eqref{charfn}. 
The main goal is to estimate the spectral density from a single path of $X$.

Statistical inference on stationary real harmonizable $S\alpha S$ processes is by far not trivial due to their non-ergodic nature. 
Note that the class of stationary harmonizable $S\alpha S$ processes and stationary $S\alpha S$ moving averages is disjoint for $0<\alpha<2$ \cite[Theorem 6.7.2]{gennady}, see also \cite{cambanissoltani} for an in-depth account on the harmonizable and moving average representation of stationary stable processes as well as \cite{rosinski} for details on the their general structure. 
In fact, only in the Gaussian case ($\alpha=2$) a stationary $S\alpha S$ process might admit both a harmonizable as well as a moving average representation. 

For $0<\alpha<2$ the crucial difference between both classes lies in their ergodicity. In fact, stationary $S\alpha S$ moving averages are ergodic whereas stationary harmonizable $S\alpha S$ processes are non-ergodic \cite{cambanis1}.
When a stationary stochastic process is non-ergodic, the use of standard estimation methods involving empirical averages is not feasible as the limit is random. 
This is due to Birkhoff's ergodic theorem \cite[Corollary 10.9]{kallenberg}, see also \ref{appendix: ergodicity} for brief account on the ergodicity of stationary real  processes.

\subsection{Problem setting}
\label{subsection: problem setting}

The following problem can be stated for random fields on $\mathbb{R}^d$. For numerical reasons we stick to the one-dimensional case $d=1$, see Remark \ref{rem: Rd periodogram}.
Extending the results of \cite{viet2}, the main focus of this work is the statistical inference on harmonizable $S\alpha S$ processes similar to Equation \eqref{SRH} with an \emph{infinite control measure}, i.e. when $m(\mathbb{R})=\infty$. 
If the control measure $m$ is absolutely continuous with respect to the Lebesgue measure on $\mathbb{R}$, then this means that the spectral density $f(x)=m(dx)/dx$ is not integrable on $\mathbb{R}$.

First, note that the process  $$X(t)=Re\left(\int_{\mathbb{R}}e^{itx}M_\alpha(dx)\right),\quad t\in\mathbb{R},$$ is not well defined when $M_\alpha$ has infinite control measure since 
\begin{align*}
	\int_\mathbb{R}\left\vert e^{itx}\right\vert^\alpha m(dx)=\int_\mathbb{R}m(dx)=m(\mathbb{R})=\infty. 
\end{align*}
and the necessary condition $e^{itx}=h_t(x)\in L^\alpha(\mathbb{R},m)$ is not fulfilled.
Instead, we may consider harmonizable $S\alpha S$ processes of the form 
\begin{align}
	\label{newharmonizable}
	X(t)=Re \left(\int_\mathbb{R}\left(e^{itx}-1\right) M_\alpha(dx)\right), 
\end{align}
where again $M_\alpha$ is an isotropic complex $S\alpha S$ random measure with infinite control measure $m(dx)=f(x)dx$. A sufficient condition for $X$ to be well-defined is that the spectral density $f$ lies in the weighted Lebesgue space $L^1(\mathbb{R},\min(1,\vert x\vert^\alpha)dx)$. 
This follows from the simple fact that $\vert e^{itx}-1\vert^\alpha=2\vert\sin(tx/2)\vert^\alpha$, which is bounded for all $t,x\in\mathbb{R}$ and behaves like like $\vert x\vert^\alpha$ as $x\rightarrow0$ for any $t\in\mathbb{R}$.
Again, the goal here is to estimate the index of stability $\alpha$ as well as the spectral density $f$ from a single path of $X$. 

Alternatively, when the control measure $m$ is the Lebesgue measure on $\mathbb{R}$, statistical inference for the spectral density is obviously not of interest as $f\equiv 1$.  
In this case, we study the class of stationary-increment harmonizable $S\alpha S$ motion $X=\{X(t):t\in\mathbb{R}\}$ defined by 
\begin{align}
	\label{statincrementSAS}
	X(t)=Re\left(\int_{\mathbb{R}}\left(e^{itx}-1\right)\Psi(x)M_\alpha(dx)\right),\quad t\in\mathbb{R},
\end{align}
where $M_\alpha$ is a complex isotropic $S\alpha S$ random measure with Lebesgue control measure $m(dx)=dx$ on $\mathbb{R}$. Here, the function $\Psi$ is a real valued function belonging to the weighted Lebesgue space $L^\alpha(\mathbb{R},\min(1,\vert x\vert^\alpha)dx)$ \cite{ayache2}.
Aside from the index of stability $\alpha$, the function $\Psi$ is the new estimation target, as it uniquely defines the process $X$. 

For both processes \eqref{newharmonizable} and \eqref{statincrementSAS}, the estimation of the spectral density $f$ or the function $\Psi$, respectively, is essentially the same problem as $\widetilde{M}_\alpha(dx)=\Psi(x)M_\alpha(dx)$ is a complex, isotropic and $S\alpha S$ with control measure $\widetilde{m}(dx)=\vert\Psi(x)\vert^\alpha m(dx)$, compare \cite[Proposition 3.5.5]{gennady}.

In some cases the function $\Psi$ in \eqref{statincrementSAS} is (assumed to be) explicitly known. For $\Psi(x)=\vert x\vert^{H-1/\alpha}$ with $H\in(0,1)$, the process
\begin{align*}
%	\label{fracstablemotion}
	X^H(t)=Re\left(\int_{\mathbb{R}}\left(e^{itx}-1\right)\vert x\vert^{-H-1/\alpha}M_\alpha(dx)\right),\quad t\in\mathbb{R},
\end{align*}
defines the so-called \emph{real harmonizable fractional stable motion} \cite[Section 7.7]{gennady}. 
In the Gaussian case $\alpha=2$, the process is the well-known fractional Brownian motion, which admits both a moving average as well as a harmonizable representation. 
The generalization of the harmonizable representation to the case $\alpha\in(0,2)$ is the aforementioned harmonizable fractional stable motion, whereas the generalization of its moving average representation to $\alpha\in(0,2)$ yields the class of linear fractional stable motions \cite[Section 7.4]{gennady}. 

The main goal here is to estimate the index of stability $\alpha$ and the parameter $H$ from a single path of $X^H$. 
Similar to the Gaussian case, we refer to the parameter $H$ as the \emph{Hurst parameter} of the process $X^H$.

\section{Mollifier}
\label{section: mollifier}

Let $X=\{X(t):t\in\mathbb{R}\}$ be a measurable stationary-increment harmonizable $S\alpha S$ motion with 
\begin{align*}
	X(t)=Re\left(\int_{\mathbb{R}}\left(e^{itx}-1\right)\Psi(x)M_\alpha(dx)\right),\quad t\in\mathbb{R},
\end{align*}
where $M_\alpha$ is a complex isotropic $S\alpha S$ random measure with Lebesgue control measure $m(dx)=dx$ and $\Psi$ is a real-valued function in the weighted Lebesgue space $L^\alpha(\mathbb{R},\min(1,\vert x\vert^\alpha)dx)$.
If $X(t)$ is almost surely integrable with respect to $v(t)dt$, we define the \emph{mollified process} $\tilde{X}_{v}=\{\tilde{X}_{v}(s):s\in\mathbb{R}\}$ by 
\begin{align}
	\label{mollified process}
	\tilde{X}_{v}(s)=\int_{\mathbb{R}}X(t)v\left(t-s\right)dt,\quad s\in\mathbb{R}. 
\end{align}
For the ease of notation, we write $\tilde{X}=\tilde{X}_{v}$ in the following.

The objective of this section is to analyze the distributional behavior of the mollified process $\tilde{X}$ as well as specifying conditions on $v$ for $\tilde{X}$ to be well-defined. 
Indeed, it can be shown that for a suitable choice of a smoothing kernel $\nu$ the smoothed process $\tilde{X}$ is a stationary real harmonizable stable process with finite control measure. 

First, we define the Fourier transform and the inverse Fourier transform on the space of integrable and square-integrable functions on $\mathbb{R}$ as follows. 
Let $v,w\in L^1(\mathbb{R})\cap L^2(\mathbb{R})$. Then, 
\begin{align*}
	\mathcal{F}v(x)=\int_\mathbb{R}e^{itx}v(t)dt,
	\quad
	\mathcal{F}^{-1}w(t)=\frac{1}{2\pi}\int_\mathbb{R}e^{-itx}w(x)dx.
\end{align*}
We denote by $W^{k,p}(\mathbb{R})$ the Sobolev space 
\begin{align*}
	W^{k,p}(\mathbb{R})=\left\{ u \in L^p(\mathbb{R}): u^{(j)}\in L^p(\mathbb{R}), \ j=1,\dots,k\right\}.
\end{align*}
It is well-known that the smoothness of a function and the rate of decay of its Fourier transform at the tails are directly related. In particular, it holds that $ t^k\mathcal{F}u(t)\rightarrow 0$ as $\vert t\vert\rightarrow\infty$ if $u\in W^{1,k}(\mathbb{R})$. 
This is a direct consequence of the fact that the Fourier transform of the $k$-th derivative $u^{(k)}$ of a function $u$ is given by $\mathcal{F}u^{(k)}(t)=i^kt^k\mathcal{F}u(t)$, which vanishes at its tails by the Riemann-Lebesgue lemma \cite[Propositions 2.2.11(9), 2.2.17]{grafakos}.

The following result states that under a suitable choice of the mollifier $v$, the mollified process $\tilde{X}_v$ is a stationary real harmonizable $S\alpha S$ with finite control measure. 
Its proof (given in \ref{appendix: proof theorem mollify}) hinges on integrability conditions for the paths of $\alpha$-stable processes given in \cite{integrability} and so-called \emph{$\alpha$-sine transforms}.
\begin{thm}
	\label{thm: mollify}
	Let $w\in L^1(\mathbb{R})\cap L^2(\mathbb{R})$ be an even function that vanishes at $0$ with $w(x)=O(\vert x\vert)$ as $\vert x\vert\rightarrow 0$. Additionally, assume that $w$ lies in the space $W^{1,3}(\mathbb{R})$. 
	Denote by $v=\mathcal{F}^{-1}w$ the Fourier inverse of $w$.
	Then, for a measurable stationary increment harmonizable $S\alpha S$ motion \eqref{statincrementSAS}, $\alpha\in(0,2)$, it holds that
		\begin{align}
			\label{integrabilitycondition}
			\int_{\mathbb{R}}\vert X(t)\vert v(t) dt <\infty\quad a.s.
		\end{align}
	Furthermore, the mollified process $\tilde{X}_v=\{\tilde{X}_v(s):s\in\mathbb{R}\}$ 
	is a stationary real harmonizable $S\alpha S$ process with finite control measure 
		$$\tilde{m}(dx)=\left\vert w\left(x\right) \Psi(x)\right\vert^\alpha dx.$$
\end{thm}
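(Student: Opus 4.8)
The plan is to split the statement into two parts: first establishing the a.s. pathwise integrability \eqref{integrabilitycondition}, which is what makes the mollified process well-defined, and then identifying the distribution of $\tilde X_v$ by collapsing $\tilde X_v(s)$ to a single harmonizable integral via a deterministic Fourier computation together with a stochastic Fubini argument. Throughout I would write $f_t(x)=(e^{itx}-1)\Psi(x)$, so that $X(t)=\mathrm{Re}\int_\mathbb{R}f_t(x)\,M_\alpha(dx)$ and, using $|e^{itx}-1|=2|\sin(tx/2)|$, one has $\|f_t\|_\alpha^\alpha=2^\alpha\int_\mathbb{R}|\sin(tx/2)|^\alpha|\Psi(x)|^\alpha\,dx$. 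The quantities $\int|\sin(tx/2)|^\alpha(\cdot)\,dx$ are the $\alpha$-sine transforms flagged before the statement, and they are the objects that the integrability criterion of \cite{integrability} ultimately requires one to control.

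For the integrability part I would first extract the decay of the mollifier from the smoothness of $w$. Since $w\in W^{1,3}(\mathbb{R})$, the relation between smoothness and Fourier decay recorded just above the theorem, applied to $v=\mathcal{F}^{-1}w$ (using that $w$ is even), yields $t^{3}v(t)\to0$ as $|t|\to\infty$, while $w\in L^1(\mathbb{R})$ makes $v$ bounded and continuous; hence $v(t)=O\big((1+|t|)^{-3}\big)$. Next I would bound the growth of $\|f_t\|_\alpha$: splitting the defining integral at $|x|=1/|t|$, using $|\sin(tx/2)|\le|tx/2|$ on the inner region and $|\Psi|^\alpha=|x|^{-\alpha}\big(|x|^\alpha|\Psi|^\alpha\big)$ on the outer region, the hypothesis $\Psi\in L^\alpha(\mathbb{R},\min(1,|x|^\alpha)dx)$ gives a bound of the form $\|f_t\|_\alpha\le C(1+|t|)$. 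The path-integrability criterion of \cite{integrability} then reduces \eqref{integrabilitycondition} to the finiteness of a deterministic integral governed by $\int_\mathbb{R}\|f_t\|_\alpha\,v(t)\,dt$ (in the precise shape dictated by whether $\alpha<1$ or $\alpha\ge 1$), and the competition between the at-most-linear growth of $\|f_t\|_\alpha$ and the cubic decay of $v$ makes this integral converge.

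For the distributional part, the key deterministic identity is that, for each $s$, $\int_\mathbb{R}(e^{itx}-1)v(t-s)\,dt=e^{isx}w(x)$. Indeed, substituting $u=t-s$ and using $\mathcal{F}v=\mathcal{F}\mathcal{F}^{-1}w=w$ produces the term $e^{isx}w(x)$, while the constant term vanishes because $\int_\mathbb{R}v(u)\,du=\mathcal{F}v(0)=w(0)=0$ by the assumption that $w$ vanishes at the origin. Writing $\tilde X_v(s)=\mathrm{Re}\int_\mathbb{R}X(t)v(t-s)\,dt$ and interchanging the Lebesgue integral with the stable integral by a stochastic Fubini theorem — whose hypotheses are exactly what the first part supplies — I would obtain $\tilde X_v(s)=\mathrm{Re}\int_\mathbb{R}e^{isx}w(x)\Psi(x)\,M_\alpha(dx)$. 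Since $w\Psi$ is real-valued, Proposition 3.5.5 of \cite{gennady} shows that $\tilde M_\alpha(dx)=w(x)\Psi(x)M_\alpha(dx)$ is again a complex isotropic $S\alpha S$ random measure with control measure $\tilde m(dx)=|w(x)\Psi(x)|^\alpha dx$, so $\tilde X_v$ is of the form \eqref{SRH} and, by isotropy, stationary. Finally I would check $\tilde m(\mathbb{R})<\infty$: near the origin $|w\Psi|^\alpha=O(|x|^\alpha)|\Psi|^\alpha$ by $w(x)=O(|x|)$, matching the weight $\min(1,|x|^\alpha)$, and away from the origin $|w\Psi|^\alpha\le\|w\|_\infty^\alpha|\Psi|^\alpha$, so both pieces are integrable by the membership $\Psi\in L^\alpha(\mathbb{R},\min(1,|x|^\alpha)dx)$.

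The step I expect to be the main obstacle is the rigorous justification of the stochastic Fubini interchange, since for $1\le\alpha<2$ swapping a deterministic integral with an $\alpha$-stable integral is delicate and is not guaranteed by $L^\alpha$-bounds alone. This is exactly why \eqref{integrabilitycondition} must be established first, and with the sharp criterion of \cite{integrability}: it furnishes the a.s. finiteness and the uniform control needed to pass an approximation (by simple functions, or by truncation in $t$) through the stable integral and identify the limit. A secondary technical point is confirming that the growth exponent of $\|f_t\|_\alpha$ is genuinely sub-cubic for every admissible $\Psi$, which is what justifies the relatively strong smoothness requirement $w\in W^{1,3}(\mathbb{R})$ imposed in the hypotheses.
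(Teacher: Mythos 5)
Your overall architecture matches the paper's: the integrability criterion of \cite{integrability} for \eqref{integrabilitycondition}, then the Fourier identity $\int_\mathbb{R}(e^{itx}-1)v(t-s)\,dt=e^{isx}w(x)$ combined with the stochastic Fubini theorem of \cite{integrability} and \cite[Proposition 3.5.5]{gennady} to identify $\tilde X_v$; your second part, including the finiteness check of $\tilde m$, is correct essentially as written. The genuine gap is in the first part. The criterion of \cite[Theorem 3.3]{integrability} is not a single condition ``governed by $\int\Vert f_t\Vert_\alpha v(t)\,dt$''; it takes three structurally different forms according to $\alpha<1$, $\alpha=1$, $\alpha>1$, and your growth-versus-decay argument only addresses the third. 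For $\alpha\in(0,1)$ the condition reads
\begin{align*}
\int_\mathbb{R}\Bigl(\int_\mathbb{R}\bigl\vert(e^{itx}-1)\Psi(x)\bigr\vert\,v(t)\,dt\Bigr)^{\alpha}dx<\infty,
\end{align*}
i.e.\ the $t$-integration sits \emph{inside} the $\alpha$-th power, and since $a\mapsto a^\alpha$ is concave for $\alpha<1$ there is no inequality reducing this to control of $\Vert f_t\Vert_\alpha$ as a function of $t$. Here $\vert\Psi(x)\vert$ factors out of the inner integral and the relevant object is the sine transform of the \emph{mollifier}, $\mathcal{T}_1v(x/2)=\int_\mathbb{R}\vert\sin(tx/2)\vert v(t)\,dt$, viewed as a function of $x$: one needs $\mathcal{T}_1v(x)=O(\vert x\vert)$ as $x\rightarrow0$ (this is where $\int\vert t\,v(t)\vert\,dt<\infty$, i.e.\ the cubic decay of $v$, enters) and boundedness as $\vert x\vert\rightarrow\infty$ (from $v\in L^1(\mathbb{R})$), so that $\vert\Psi(x)\vert^\alpha\bigl(\mathcal{T}_1v(x/2)\bigr)^\alpha$ is dominated by a constant times $\min(1,\vert x\vert^\alpha)\vert\Psi(x)\vert^\alpha$, which is integrable by the hypothesis on $\Psi$. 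This is exactly the argument the paper gives for $\alpha<1$; your proposal does not contain it.

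The second gap is $\alpha=1$, which the theorem covers but which you silently lump with $\alpha>1$. For $\alpha=1$ the criterion of \cite{integrability} is not $\int\Vert f_t\Vert_1v(t)\,dt<\infty$ but carries an additional factor $1+\log_+(\cdot)$ of a ratio of integrals, and the paper spends the larger part of its proof on this case: it bounds $1+\log_+(z)\leq 1+z$, splits the resulting condition into two double integrals $I_1+I_2$, and needs sine-transform asymptotics both in $x$ (for $\mathcal{T}_1v$) and in $t$ (for $\mathcal{T}_1\vert\Psi\vert$) to show $I_2<\infty$. Without this, well-definedness of $\tilde X_v$ at $\alpha=1$ is unproved. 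On the positive side, for $\alpha\in(1,2)$ your direct bound $\Vert f_t\Vert_\alpha\leq C(1+\vert t\vert)$, obtained by splitting at $\vert x\vert=1/\vert t\vert$, is correct and arguably cleaner than the corresponding asymptotic statement in the paper's lemma, and your emphasis that the a.s.\ integrability is precisely what licenses the stochastic Fubini interchange is exactly how the paper proceeds.
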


\begin{rem}
	\label{rem: scaling}
	One might introduce a tuning parameter $\theta>0$ which scales the smoothing function $v$ by $v(\theta \ \cdot)$ such that the mollified process $\tilde{X}_{v,\theta}=\{\tilde{X}_{v,\theta}(s):s\in\mathbb{R}\}$ is given by  
	\begin{align*}
		\tilde{X}_{v,\theta}(s) = \int_\mathbb{R} X(t)v\left(\theta(t-s)\right) dt. 
	\end{align*}
	By the scaling property of the Fourier transform, i.e. $\mathcal{F}v(\theta \ \cdot)(x) = \mathcal{F}v(x/\theta)/\theta=w(x/\theta)/\theta$, this directly translates to the control measure $\tilde{m}_\theta$ as 
	\begin{align*}
		\tilde{m}_\theta(dx)=\left\vert \frac{1}{\theta}w\left(\frac{x}{\theta}\right)\Psi(x)\right\vert^\alpha dx.
	\end{align*}
\end{rem}

\section{Statistical inference}
\label{section: statistical inference}

Throughout this section the following notation is used. 
Denote by $\tilde{X}_{\theta}=\tilde{X}_{v,\theta}$ the mollified process $\tilde{X}_\theta(s)=\int_\mathbb{R}X(t)v(\theta(t-s))dt$ with smoothing function $v$ and scaling parameter $\theta>0$. 
By Remark \ref{rem: scaling}, the control measure of $\tilde{X}_\theta$ is given by $\tilde{m}_\theta(dx)=\tilde{f}_\theta(x)dx$ with the spectral density $$\tilde{f}_\theta(x)=\left\vert\frac 1\theta w\left(\frac x\theta \right)\Psi(x)\right\vert^{\alpha}\in L^{1}(\mathbb{R}).$$
Let $\rho_\theta$ be the probability density function of the underlying random frequencies $\{Z_k\}_{k=1}^\infty$ in the process' series representation with
\begin{align*}
	\rho_\theta=\tilde{f}_\theta\left(\int_\mathbb{R}\tilde{f}_\theta(x)dx\right)^{-1}.
\end{align*} 
Choose $w\in L^1(\mathbb{R})\cap L^2(\mathbb{R})$ to be a non-negative, even and continuous function on $\mathbb{R}$, and denote $w_\theta(\cdot) = w(\cdot/\theta)/\theta$. It follows that 
\begin{align}
	\label{eq: rho theta}
	\rho_\theta= w_\theta^{\alpha}\vert\Psi\vert^\alpha c_{\alpha,\theta}\,,
\end{align}
where $c_{\alpha,\theta}=\left(\int_\mathbb{R}w_\theta^{\alpha}(x)\Psi^\alpha(x)dx\right)^{-1} $.
Additionally, assume that the function $\Psi$ is even and continuous. 

\subsection{Periodogram frequency estimation for stationary real harmonizable $S\alpha S$ processes}
\label{subsection: periodogram}

Any stationary real harmonizable $S\alpha S$ process $\tilde{X}=\{\tilde{X}(t):t\in\mathbb{R}\}$ with control measure $\tilde{m}(dx)=\tilde{f}(x)dx$ (and spectral density $\tilde{f}$) admits the LePage-type series representation 
\begin{align}
	\label{SRH-LePage}
	\tilde{X}(t)\stackrel{d}{=}C_\alpha \tilde{m}(\mathbb{R})^{1/\alpha} \sum_{k=1}^\infty\Gamma_k^{-1/\alpha}\left(G_k^{(1)}\cos(tZ_k)+G_k^{(2)}\sin(tZ_k)\right),\quad t\in\mathbb{R},
\end{align}
where $\{\Gamma_k\}_{k\in\mathbb{N}}$ are the arrival times of a unit rate Poisson process, $\{G^{(j)}_k\}_{k\in\mathbb{N}}$, $j=1,2$, are sequences of i.i.d. standard normal random variables, $\{Z_k\}_{k\in\mathbb{N}}$,
is a sequence of i.i.d. random variables with probability density function $\rho(\cdot) = \tilde{f}(\cdot)/\int_\mathbb{R}\tilde{f}(x)dx$, and  
\begin{align*}
	C_\alpha=\left(2^{\alpha/2}\Gamma\left(1+\frac{\alpha}{2}\right)\int_0^\infty \frac{\sin(x)}{x^{\alpha}}dx\right)^{-1/\alpha}.
\end{align*}
All sequences of random variables above are independent of each other.
In particular,  conditionally on $\{\Gamma_k\}_{k=1}^\infty$, $\{Z_k\}_{k=1}^\infty$, the process $\tilde{X}$ is Gaussian with purely atomic spectral measure concentrated at the points $\{\pm\vert Z_k\vert\}_{k=1}^\infty$, cf \cite[Proposition 6.6.4]{gennady}, which leads to the following approach.

A well-known estimation tool in spectral analysis of stationary processes is the so-called periodogram. 
The \emph{periodogram} $I(\theta)$ of the sample $(x(1),\dots,x(n))=(\tilde{X}(t_1),\dots,\tilde{X}(t_n))$ with equidistant sample points $t_k=k\delta$, $k=1,\dots,n$ and $\delta>0$, from a single path of $\tilde{X}$ as in \eqref{SRH-LePage} is defined by 
\begin{align*}
	I_n\left(\theta_j\right)= n^{-2}\left\vert \sum\limits_{k=1}^nx(k)e^{-ik\theta_j}\right\vert^2,
\end{align*}
where $\{\theta_j\}_{j=1}^n=\left\{2\pi j/n\right\}_{j=1}^n$ are the so-called Fourier frequencies. 
Straightforward computation yields that for any fixed $k\in\mathbb{N}$ the periodogram behaves like
\begin{align*}
	I_n(\delta\theta)
	=\frac{R_k^2}{4}\frac{\sin^2\left(\frac{n}{2}\delta(\vert Z_k\vert -\theta)\right)}{n^2\sin^2\left(\frac{1}{2}\delta(\vert Z_k\vert -\theta)\right)}
	+ O\left(n^{-2}\right)
\end{align*}
as $\theta\rightarrow \vert Z_k\vert$ and $n\rightarrow\infty$, where 
\begin{align}
	\label{amplitudes R}
	R_k&
	=C_\alpha\tilde{m}(\mathbb{R})^{1/\alpha}\Gamma_k^{-1/\alpha}\sqrt{\left(G_k^{(1)}\right)^2+\left(G_k^{(2)}\right)^2}
\end{align}
are referred to as amplitudes and $Z_k$ as frequencies of $\tilde{X}$, see \cite[Theorem 2]{viet2}.
It follows that the periodogram shows pronounced peaks at the frequencies close to the absolute frequencies $\vert Z_k\vert$.

Assuming that the spectral density $\rho$ is an even probability density function on $\mathbb{R}$, i.e. $\rho(x-)=\rho(x)$ for all $x\in\mathbb{R}$, the random frequencies $Z_k$ are i.i.d. symmetric random variables with probability density function $\rho$, in particular the absolute frequencies $\vert Z_k\vert$ are i.i.d. with probability density function $2\rho(x)$, $x\geq 0$.

Let $\{\hat{Z}_{N,n}\}_{k=1}^N$ be the locations of the $N$ largest peaks of the periodogram $I_n$ (computed as in \cite[Section 4]{viet2}). 
In addition, let $R_{[k]}$ be the amplitudes in descending order, i.e. $R_{[1]}>R_{[2]}>\dots$ a.s., and denote by $Z_{[k]}$ the corresponding frequencies. 
It can be shown that $\{\hat{Z}_{N,n}\}_{k=1}^N$ are strongly consistent estimators for $\{\vert Z_{[k]}\vert\}_{k=1}^N$ \cite[Theorem 4]{viet2}.
Ultimately, the kernel density estimator  
\begin{align}
	\label{kdeZhat}
	\hat \rho^{(N,n)}(x)=\frac{1}{2Nh_N}\sum\limits_{k=1}^N\kappa\left(\frac{x-\hat Z_{k,n}}{h_N}\right)
\end{align}
with kernel function $\kappa$ and bandwidth $h_N$ yields an estimate for the spectral density $f$ on the positive real line.
Under minor assumptions on $\rho$, $\kappa$ and $\{h_N\}_{N\in\mathbb{N}}$ the pointwise and uniform consistency of $\hat{\rho}_{N,n}$ in the weak and strong sense can be established \cite[Corollary 2]{viet2}.
\begin{lem}
	\label{lem: kdeconsistency}
	Let $\kappa$ be a Lipschitz-continuous kernel function and $h_N\rightarrow0$ with $Nh_N\rightarrow\infty$. Then, for any $\varepsilon>0$ it holds that
	\begin{align*}
		\lim_{N\rightarrow\infty}\lim_{n\rightarrow\infty} \mathbb{P}\left(\left\vert\hat\rho^{(N,n)}(z)-\rho(z)\right\vert>\varepsilon\right) = 0
	\end{align*}
	for every continuity point $z\in\mathbb{R}$ of $\rho$.
\end{lem}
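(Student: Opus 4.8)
The plan is to split the iterated limit into its two natural stages. The inner limit $n\to\infty$ should replace the estimated peak locations $\hat Z_{k,n}$ by the true absolute frequencies $\vert Z_{[k]}\vert$, leaving an ``oracle'' kernel density estimator built from a genuine i.i.d.\ sample; the outer limit $N\to\infty$ is then a classical pointwise-consistency statement for that estimator. Accordingly I would first identify the law of the sample $\{\vert Z_{[k]}\vert\}_k$, then carry out the two limits in order.

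First, fix $N$ and let $n\to\infty$. By the strong consistency of the periodogram peak estimators \cite[Theorem 4]{viet2} we have $\hat Z_{k,n}\to \vert Z_{[k]}\vert$ almost surely for each $k\in\{1,\dots,N\}$. Since $\kappa$ is Lipschitz (hence continuous) and $h_N$ does not depend on $n$, each summand converges almost surely, and as the sum is finite,
\[
\hat\rho^{(N,n)}(z)\longrightarrow \tilde\rho^{(N)}(z):=\frac{1}{2Nh_N}\sum_{k=1}^N\kappa\left(\frac{z-\vert Z_{[k]}\vert}{h_N}\right)\qquad\text{a.s. as }n\to\infty.
\]
Almost sure convergence gives convergence in probability, so a triangle-inequality split yields
\[
\limsup_{n\to\infty}\mathbb{P}\left(\left\vert\hat\rho^{(N,n)}(z)-\rho(z)\right\vert>\varepsilon\right)\leq \mathbb{P}\left(\left\vert\tilde\rho^{(N)}(z)-\rho(z)\right\vert>\varepsilon/2\right),
\]
which reduces the claim to showing $\tilde\rho^{(N)}(z)\to\rho(z)$ in probability as $N\to\infty$.

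For the outer limit I would first pin down the distribution of the sample. The frequencies $\{Z_k\}$ are i.i.d.\ with density $\rho$ and independent of the amplitudes $\{R_k\}$ (equivalently of $\{\Gamma_k,G_k\}$), while the reindexing $k\mapsto[k]$ induced by ordering the amplitudes is measurable with respect to $\{R_k\}$ alone. Permuting an i.i.d.\ sequence by an independent permutation preserves the i.i.d.\ structure, so $\{Z_{[k]}\}$ remain i.i.d.\ with density $\rho$; by symmetry of $\rho$ the $\{\vert Z_{[k]}\vert\}$ are i.i.d.\ with density $2\rho$ on $[0,\infty)$. Hence $\tilde\rho^{(N)}(z)$ is exactly one half of a textbook kernel density estimator for the density $2\rho$, and the prefactor $\tfrac12$ cancels the folding factor so that the target at a continuity point $z>0$ is $\rho(z)$. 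The standard bias/variance estimates then close the argument: $\mathbb{E}[\tilde\rho^{(N)}(z)]=\int\kappa(u)\,\rho(z-h_N u)\,du\to\rho(z)$ as $h_N\to0$ by continuity of $\rho$ at $z$ and $\int\kappa=1$, while $\mathrm{Var}(\tilde\rho^{(N)}(z))=O\big((Nh_N)^{-1}\big)\to0$ since $Nh_N\to\infty$; Chebyshev's inequality gives convergence in probability.

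The main obstacle is not the analytic kernel estimates, which are routine, but the careful bookkeeping of the two-stage limit together with the distributional identification of the reordered frequencies. One must genuinely justify that ordering by amplitude does not disturb the i.i.d.\ law of the frequencies, which is where the independence of $\{Z_k\}$ from $\{\Gamma_k,G_k\}$ is essential, and one must ensure the peak-estimation error vanishes before $N$ grows. The iterated-limit formulation is what makes this tractable: for each \emph{fixed} finite $N$ the inner limit only requires almost sure convergence of finitely many summands, so no joint rate linking $n$ and $N$ is needed. I would also note that the estimator targets $\rho$ on the positive half-line, so the assertion is established for continuity points $z>0$ (the symmetry $\rho(z)=\rho(-z)$ recovering the density on the negative axis).
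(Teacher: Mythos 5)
Your argument is correct and is essentially the intended one: the paper does not prove this lemma itself but imports it from \cite[Corollary 2]{viet2}, and your two-stage argument reconstructs exactly the ingredients the paper relies on — strong consistency of the periodogram peak estimators (\cite[Theorem 4]{viet2}) handling the inner limit in $n$ for each fixed $N$, and classical Parzen-type bias/variance estimates for the resulting oracle estimator in the outer limit in $N$. Your key observation that ordering by amplitudes preserves the i.i.d.\ law of the frequencies (because $\{R_k\}$ is a function of $\{\Gamma_k, G_k\}$ alone, independent of $\{Z_k\}$) is precisely what makes the oracle estimator a genuine i.i.d.\ kernel density estimator, and your closing remark that the statement really concerns continuity points $z>0$, with the negative axis recovered by evenness of $\rho$, is a fair and careful reading of the lemma as stated.
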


\begin{rem}
	\label{rem: Rd periodogram}
	All processes introduced in Section \ref{subsection: problem setting} can be extended to random fields on $\mathbb{R}^d$, $d\geq 2$ by setting $(E,\mathcal{E})=(\mathbb{R}^d,\mathcal{B}(\mathbb{R}^d))$ and replacing all products $tx$, $t,x\in\mathbb{R}$ by scalar products $(\bm{t},\bm{x})$, $\bm{t},\bm{x}\in\mathbb{R}^d$,
	in the definitions of Section \ref{section: prelim} .
	The periodogram of a sample from  a stationary real harmonizable $S\alpha S$ random field shows peaks corresponding to the frequencies $\bm{Z}_k=(Z_{k,1},\dots,Z_{k,d})$ in all orthants of the Euclidean space $\mathbb{R}^d$ at $(\epsilon_1Z_{k,1},\dots,\epsilon_2Z_{k,d})$, where $\epsilon_k=\pm 1$ for all $k=1,\dots,d$. 
	Hence, analogous to the symmetry condition in the one-dimensional case, the spectral density $\Psi:\mathbb{R}^d\rightarrow\mathbb{R}$ must satisfy 
	$\Psi(x_1,\dots,x_d)=\Psi(\epsilon_1x_1,\dots,\epsilon_dx_d)$ for all $x\in\mathbb{R}^d$ and $\epsilon_k=\pm 1$, $k=1,\dots,d$.
	In practice, computational expenses for the multidimensional discrete Fourier transform, on which the periodogram relies, increase drastically with higher dimensions. For this reason, we only consider the case $d=1$ in this work. 
\end{rem}

\subsection{Stationary-increment harmonizable $S\alpha S$ motions}
\label{section: stat incr proc}
%{\color{red}

We consider the stationary-increment harmonizable $S\alpha S$ motion 
\begin{align*}
	X(t)=Re\left(\int_\mathbb{R}\left(e^{itx}-1\right)\Psi(x)M_\alpha(dx)\right),\quad t\in\mathbb{R},
\end{align*}
where $\Psi\in L^\alpha(\mathbb{R},\min(1,\vert x\vert^\alpha)dx)$ and $M_\alpha$ is a complex, isotropic $S\alpha S$ random measure with Lebesgue control measure $m(dx)=dx$.
Let $\theta_1,\theta_2>0$. Note that for all $z\in\mathbb{R}$ for which the ratio $\rho_{\theta_1}(z)/\rho_{\theta_2}(z)$ is well defined, it is also independent of $\Psi(x)$, in particular, by \eqref{eq: rho theta}
\begin{align}
	\label{ratio}
	\frac{\rho_{\theta_1}(z)}{\rho_{\theta_2}(z)} = \frac{c_{\alpha,\theta_1}w_{\theta_1}^\alpha(z)}{c_{\alpha,\theta_2}w_{\theta_2}^\alpha(z)}.
\end{align}
Moreover, if $w$ is an even, continuous function then so is $\rho_{\theta_1}/\rho_{\theta_2}$, hence it suffices to consider the positive real line only. 

Consider $z_1,\dots,z_L>0$, $L\in\mathbb{N}$. 
Applying a logarithmic transformation to Equation \eqref{ratio} yields 
\begin{align}
	\label{linearregression}
	\mathbf{y}_l = \log\left(\frac{\rho_{\theta_1}(z_l)}{\rho_{\theta_2}(z_l)}\right) = \log\left(\frac{c_{\alpha,\theta_1}}{c_{\alpha,\theta_2}}\right)+\alpha\log\left(\frac{w_{\theta_1}(z_l)}{w_{\theta_2}(z_l)}\right) = b+\alpha \mathbf{x}_l, \quad l=1,\dots,L.
\end{align}

Let $(x_{\theta_i,1},\dots,x_{\theta_i,n})=(\tilde{X}_{\theta_i}(s_1),\dots,\tilde{X}_{\theta_i}(s_n))$, $i=1,2$, be samples of the mollified processes $\tilde{X}_{\theta_i},$ $i=1,2$, sampled at equidistant points $s_j=j\delta$, $j=1,\dots,n$, $n\in\mathbb{N}$, and $\delta>0$. 
Similar to Equation \eqref{kdeZhat}, the kernel density type estimator $\hat\rho_{\theta_i}$ for $\rho_{\theta_i}$ is given by 
\begin{align*}
	%		\label{kdeZhat_theta}
	\hat\rho_{\theta_i}^{(N,n)}(z) = \frac{1}{2Nh_N}\sum_{k=1}^N\kappa\left(\frac{z-\hat{Z}_{\theta_i,k,n}}{h_N}\right),\quad z\in\mathbb{R},
\end{align*} 
where $\{\hat{Z}_{\theta_i,k,n}\}_{k=1}^N$, $N\in\mathbb{N}$, $i=1,2$, are the periodogram frequency estimators attained from the sample of $\tilde{X}_{\theta_i}$ as described above. 

Define target vectors $\mathbf{Y}_L,\mathbf{\hat{Y}}_L^{(N,n)}\in\mathbb{R}^L$ and the design matrix $\mathbf{X}_L\in\mathbb{R}^{L\times 2}$ by
\begin{align*}
	\mathbf{Y}_L=\left(
		\begin{matrix}
			\log\left(\rho_{\theta_1}(z_1)\right)-\log\left(\rho_{\theta_2}(z_1)\right)\\
			\vdots\\
			\log\left(\rho_{\theta_1}(z_L)\right)-\log\left(\rho_{\theta_2}(z_L)\right)
		\end{matrix}
	\right),
\quad
	\mathbf{\hat{Y}}_L^{(N,n)}=\left(\begin{matrix}
		\log\left(\hat\rho^{(N,n)}_{\theta_1}(z_1)\right)-\log\left(\hat\rho^{(N,n)}_{\theta_2}(z_1)\right)\\
		\vdots\\
		\log\left(\hat\rho^{(N,n)}_{\theta_1}(z_L)\right)-\log\left(\hat\rho^{(N,n)}_{\theta_2}(z_L)\right)
	\end{matrix}\right),
\end{align*}
and
\begin{align*}
	\mathbf{X}_L=
	\left(
		\begin{matrix}
			1&\log\left(w_{\theta_1}(z_1)\right)-\log\left(w_{\theta_2}(z_1)\right)\\
			\vdots&\vdots\\
			1&\log\left(w_{\theta_1}(z_L)\right)-\log\left(w_{\theta_2}(z_L)\right)
		\end{matrix}
	\right).
\end{align*}
Then, the system of linear equations \eqref{linearregression} generates a linear regression model written in matrix form as
$
	\mathbf{Y}_L=\mathbf{X}_L\bm{\beta}+\bm{\varepsilonup}_L
$, 
where $\bm{\beta}=(b,\alpha)^T$ and $\bm{\varepsilonup}_L$ is a vector of possible computation errors with mean vector $\mathbb{E}[\bm\varepsilonup_L] = \bm{0}$ and covariance matrix $\bm{Q}_L=\text{Cov}(\bm\varepsilonup_L)$. 

\begin{rem}
	\label{rem: consistency LSE}
	Consistency of the least-squares estimator $\hat{\bm\beta}_L=(\bm{X}_L^T\bm{X}_L)^{-1}\bm{X}_L^T\bm{Y}_L$ in a linear regression model $\bm{Y}_L=\bm{X}_L\bm{\beta}+\bm{\varepsilonup_L}$ is guaranteed under the following conditions \cite{drygas}. Denote by $\lambda_{\min}(\bm{A})$, $\lambda_{\max}(\bm{A})$ the smallest and largest eigenvalues of a square matrix $\bm{A}$.
	
	If the boundary conditions $\lambda_{\max}(Q_L)<\infty$, $\lambda_{\min}(Q_L)>0$ are fulfilled and $\lambda_{\min}(\bm{X}^T\bm{X})\rightarrow\infty$ as $L\rightarrow\infty$, then the least-squares estimator $\hat{\bm\beta}_L$ of $\bm\beta$ is weakly consistent. 
	The above is fulfilled for the usual assumptions on linear regression models, i.e. when the vector of errors $\bm{\varepsilonup}$ is homoscedastic with uncorrelated components, and $L^{-1}X_L^TX_L$ converges to some covariance matrix $Q$ as $L\rightarrow\infty$.
	Additionally, conditions for strong consistency are given in \cite[Section 4]{drygas}.
\end{rem}

Consider $z_1,\dots,z_L$, $L\in\mathbb{N}$, such that $w_{\theta_1}(z_i)/w_{\theta_2}(z_i)\neq w_{\theta_1}(z_j)/w_{\theta_2}(z_j)$ for some $i\neq j$.
This ensures that the matrix $\bm{X}^T\bm{X}$ is invertible and its eigenvalues $\lambda_{1},\lambda_2$ can be computed explicitly, i.e. 
\begin{align*}
	\lambda_{1,2} = \frac{1}{2}\left(L+\sum_{l=1}^Lx_l^2\pm\sqrt{\left(L+\sum_{l=1}^Lx_l^2\right)^2-4\left(L\sum_{l=1}^Lx_l^2-\left(\sum_{l=1}^Lx_l\right)^2\right)}\right),
\end{align*}
where $x_l = \log(w_{\theta_1}(z_l)/w_{\theta_2}(z_l))$, $l=1,\dots,L$. 
It is easy to verify that $\lambda_{\min} (\bm{X}^T\bm{X})\rightarrow\infty$ as $L\rightarrow\infty$.  

Denote by $\bm{\beta}^{(N,n)}_L=(\hat{b}_L^{(N,n)},\hat\alpha_L^{(N,n)})^T$ the least-squares estimator for $\bm\beta$ in the linear regression model $\hat{\bm{Y}}_L^{(N,n)}=\bm{X}_L\bm\beta+\bm{\varepsilonup_L}$, i.e.
\begin{align*}
	\bm{\beta}^{(N,n)}_L=\left(\hat{b}_L^{(N,n)}, \hat\alpha_L^{(N,n)}\right)^T =\left( \mathbf{X}_L^T\mathbf{X}_L\right)^{-1}\mathbf{X}_L^T\mathbf{\hat{Y}}_L^{(N,n)}.
\end{align*}

\begin{cor}
	\label{cor: consistent alpha}
	Let the kernel density estimators $\hat\rho_{\theta_i}^{(N,n)}$, $i=1,2$, satisfy the conditions of Lemma \ref{lem: kdeconsistency}. Assume that the conditions on the error vector $\bm\varepsilonup_L$ in Remark \ref{rem: consistency LSE} are fulfilled. 
	Then, the least-squares estimator $\bm\beta_L^{(N,n)}$ is consistent for $\bm\beta$ in the sense that 
	\begin{align}
		\label{eq: triple limit consistency}
		\lim_{L\rightarrow\infty}\lim_{N\rightarrow\infty}\lim_{n\rightarrow\infty} \mathbb{P}\left(\left\vert\hat{b}_L^{(N,n)}-b\right\vert>\varepsilon\right) = 0, \quad
		\lim_{L\rightarrow\infty} \lim_{N\rightarrow\infty}\lim_{n\rightarrow\infty} \mathbb{P}\left(\left\vert\hat{\alpha}_L^{(N,n)}-\alpha\right\vert>\varepsilon\right) = 0.
	\end{align}
\end{cor}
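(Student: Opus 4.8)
The plan is to reduce the assertion to the pointwise consistency of the kernel density estimators supplied by Lemma~\ref{lem: kdeconsistency}, exploiting that the regression~\eqref{linearregression} holds \emph{exactly} at the population level. For the true spectral densities the identity $\mathbf{y}_l = b + \alpha\,\mathbf{x}_l$ carries no residual, so the population target satisfies $\mathbf{Y}_L = \mathbf{X}_L\bm\beta$ identically. Under the distinctness condition $x_i\neq x_j$ imposed just before the corollary the Gram matrix $\mathbf{X}_L^T\mathbf{X}_L$ is invertible, whence $(\mathbf{X}_L^T\mathbf{X}_L)^{-1}\mathbf{X}_L^T\mathbf{Y}_L = \bm\beta$. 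Writing $A_L = (\mathbf{X}_L^T\mathbf{X}_L)^{-1}\mathbf{X}_L^T$, which is a deterministic matrix once $L$ and the points $z_1,\dots,z_L$ are fixed, I would first record the decomposition
\begin{align*}
	\bm\beta_L^{(N,n)} - \bm\beta = A_L\bigl(\hat{\mathbf{Y}}_L^{(N,n)} - \mathbf{Y}_L\bigr),
\end{align*}
so that the entire deviation of the estimator is driven by the error committed in estimating the target vector.

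The second step is to show that this target error vanishes in the inner iterated limit. The $l$-th coordinate of $\hat{\mathbf{Y}}_L^{(N,n)} - \mathbf{Y}_L$ equals $\bigl(\log\hat\rho_{\theta_1}^{(N,n)}(z_l) - \log\rho_{\theta_1}(z_l)\bigr) - \bigl(\log\hat\rho_{\theta_2}^{(N,n)}(z_l) - \log\rho_{\theta_2}(z_l)\bigr)$. I would choose each $z_l$ to be a continuity point of both $\rho_{\theta_1}$ and $\rho_{\theta_2}$ at which these densities are strictly positive; by \eqref{eq: rho theta} and the continuity and non-negativity of $w$ and $\Psi$ this merely amounts to $w(z_l)\Psi(z_l)\neq 0$. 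Lemma~\ref{lem: kdeconsistency} then gives $\hat\rho_{\theta_i}^{(N,n)}(z_l)\to\rho_{\theta_i}(z_l)$ in probability along $\lim_{N\to\infty}\lim_{n\to\infty}$, and continuity of the logarithm at the positive value $\rho_{\theta_i}(z_l)$, via the continuous mapping theorem, upgrades this to convergence of the log-transformed quantities. Since for fixed $L$ there are only finitely many ($2L$) such terms, a union bound yields $\bigl\Vert\hat{\mathbf{Y}}_L^{(N,n)} - \mathbf{Y}_L\bigr\Vert\to 0$ in probability in the same iterated sense.

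In the third step I would push this through the deterministic linear map $A_L$. For fixed $L$ the operator norm $\Vert A_L\Vert$ is a finite constant, and from
\begin{align*}
	\bigl\Vert\bm\beta_L^{(N,n)} - \bm\beta\bigr\Vert \leq \Vert A_L\Vert\,\bigl\Vert\hat{\mathbf{Y}}_L^{(N,n)} - \mathbf{Y}_L\bigr\Vert
\end{align*}
the right-hand side tends to $0$ in probability as $n\to\infty$ and then $N\to\infty$. As $\vert\hat b_L^{(N,n)}-b\vert$ and $\vert\hat\alpha_L^{(N,n)}-\alpha\vert$ are both bounded by $\Vert\bm\beta_L^{(N,n)}-\bm\beta\Vert$, this establishes both limits in \eqref{eq: triple limit consistency} for every fixed $L$ with invertible Gram matrix; the outermost limit $L\to\infty$ is then immediate, the inner double limit being already identically zero. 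The eigenvalue computation preceding the corollary, which shows $\lambda_{\min}(\mathbf{X}_L^T\mathbf{X}_L)\to\infty$ (equivalently $\Vert A_L\Vert\to 0$), together with the assumptions on the error vector in Remark~\ref{rem: consistency LSE}, guarantees that the design stays well-conditioned, so that the least-squares problem remains well-posed as $L$ grows.

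The main obstacle is the second step: transferring the merely pointwise-in-probability convergence supplied by Lemma~\ref{lem: kdeconsistency} through the nonlinear logarithm and through the difference of two separate kernel density estimators. This forces the strict positivity requirement $\rho_{\theta_i}(z_l)>0$ on the evaluation points, and it makes the order of limits essential: the union bound over the $L$ coordinates is harmless only because $L$ is held fixed while $n$ and then $N$ are sent to infinity, which is precisely the nesting $\lim_{L}\lim_{N}\lim_{n}$ prescribed in \eqref{eq: triple limit consistency}.
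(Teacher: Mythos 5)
Your proposal is correct in its core steps and shares the paper's main engine — pointwise consistency from Lemma \ref{lem: kdeconsistency}, the continuous mapping theorem to pass through the logarithm, and the deterministic linear map $A_L=(\mathbf{X}_L^T\mathbf{X}_L)^{-1}\mathbf{X}_L^T$ — but it diverges from the paper in how the outer limit $L\rightarrow\infty$ and the error vector are treated. The paper does \emph{not} assume the population regression is noise-free: it splits via the triangle inequality
$\vert\hat{\bm\beta}_L^{(N,n)}-\bm\beta\vert\leq\vert\hat{\bm\beta}_L^{(N,n)}-\hat{\bm\beta}_L\vert+\vert\hat{\bm\beta}_L-\bm\beta\vert$
around the intermediate estimator $\hat{\bm\beta}_L=A_L\mathbf{Y}_L$, kills the first term by the inner double limit (exactly your second and third steps), and then uses the Drygas conditions of Remark \ref{rem: consistency LSE} — including $\lambda_{\min}(\mathbf{X}_L^T\mathbf{X}_L)\rightarrow\infty$ — to send the second term to zero as $L\rightarrow\infty$; this is precisely where the corollary's hypothesis on $\bm\varepsilonup_L$ is consumed. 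You instead observe that \eqref{linearregression} is an exact algebraic identity, so $\mathbf{Y}_L=\mathbf{X}_L\bm\beta$, $A_L\mathbf{Y}_L=\bm\beta$, and the second term vanishes identically, making the outer limit trivial. That observation is sound given the paper's literal definition of $\mathbf{Y}_L$, and it buys a shorter proof needing only invertibility of the Gram matrix at each fixed $L$; what it loses is coverage of the regime the hypothesis on $\bm\varepsilonup_L$ was written for, namely a genuinely nonzero disturbance (the paper's ``computation errors'') contaminating the target vector, under which your identity $\mathbf{Y}_L=\mathbf{X}_L\bm\beta$ no longer holds and the Drygas argument becomes necessary. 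Your explicit insistence on evaluation points with $w(z_l)\Psi(z_l)\neq 0$ so that the logarithm is continuous at a strictly positive value is a useful sharpening the paper leaves implicit. One small slip: your closing sentence attributes the well-conditioning of the design to the assumptions on the error vector; these are unrelated (the conditioning comes solely from the choice of $z_1,\dots,z_L$), and in your formulation that sentence is not needed at all.
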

\begin{proof}
	The full rank of $\mathbf{X}^T\mathbf{X}$ is ensured by the choice of $z_1,\dots,z_L$, thus the linear least-squares  minimization has a unique solution $(b,\alpha)^T$. Note that in the following when referring to consistency of $\hat\rho_{\theta}^{(N,n)}$, we always consider the double limits in the order $\lim_{N\rightarrow\infty}\lim_{n\rightarrow\infty}$. 
	Under the conditions of Lemma \ref{lem: kdeconsistency} the estimators $\hat\rho_{\theta_i}^{(N,n)}$, $i=1,2$, are 
	weakly pointwise consistent at $z_1,\dots,z_L$. Hence, it follows that $\mathbf{\hat{Y}}^{(N,n)}$ is weakly consistent for $\mathbf{Y}$ by the continuous mapping theorem, and
	$$
	\hat{\bm\beta}^{(N,n)}_L=\left( \mathbf{X}_L^T\mathbf{X}_L\right)^{-1}\mathbf{X}_L^T\mathbf{\hat{Y}}_L^{(N,n)}\overset{P}{\longrightarrow}
	\left(\mathbf{X}_L^T\mathbf{X}_L\right)^{-1}\mathbf{X}_L^T\mathbf{\hat{Y}}_L=:\hat{\bm\beta}_L
	$$
	as first $n\rightarrow\infty$, then $N\rightarrow\infty$. 
	By the triangle inequality 
	$$\left\vert \hat{\bm\beta}^{(N,n)}_L-\bm\beta\right\vert\leq\left\vert\hat{\bm\beta}^{(N,n)}_L-\hat{\bm\beta}_L\right\vert+\left\vert\hat{\bm\beta}_L-\bm\beta\right\vert$$
	Equations \eqref{eq: triple limit consistency} follows.
\end{proof}

\begin{rem}
	\label{rem: kdeconsistency}
	If $\rho_\theta$ is known to be uniformly continuous and $Nh_N^2\rightarrow\infty$ holds, then 
	\begin{align*}
		\lim_{N\rightarrow\infty}\lim_{n\rightarrow\infty} \mathbb{P}\left(\left\Vert\hat\rho^{(N,n)}_{\theta}-\rho_\theta\right\Vert_\infty>\varepsilon\right) = 0.
	\end{align*}
	Furthermore, strong uniform consistency 
	\begin{align*}
		\mathbb{P}\left(\lim_{N\rightarrow\infty}\lim_{n\rightarrow\infty} \left\Vert\hat\rho^{(N,n)}_{\theta}-\rho_\theta\right\Vert_\infty=0\right) = 1
	\end{align*}
	is ensured if, additionally to its Lipschitz-continuity as in Lemma \ref{lem: kdeconsistency}, the kernel function $\kappa$ has bounded variation, vanishes at $\pm\infty$ and the bandwidth satisfies $\sum_{N=1}^\infty\exp\left(-rNh_N^2\right)<\infty$ for any $r>0$, cf. \cite{viet2}
	By the same arguments as above this yields strong consistency of $(\hat{b}^{(N,n)},\hat\alpha^{(N,n)})^T$ for $(b,\alpha)^T$.
\end{rem}

Note that, a consistent estimator for $c_{\alpha,\theta}\Psi^\alpha$, i.e. an estimator for the function $\Psi^\alpha$ up to a constant factor, is given by $(\hat{\rho}_\theta^{(N,n)})w_\theta^{-\hat\alpha^{(N,n)}}$.
Pointwise and uniform consistency in the weak and strong sense can be accomplished by Corollary \ref{cor: consistent alpha} and Remark \ref{rem: kdeconsistency} under the respective conditions. 

\begin{rem}
	\label{rem: psi}
	The kernel function $\Psi$ can only be restored from the random frequencies $\{Z_k\}_{k=1}^\infty$ up to a constant multiplicative factor. Consider the scaled version $X_c$, $c>0$, of a stationary-increment harmonizable stable process $X$ with
	\begin{align*}
		X_c(t)=Re\left(\int_\mathbb{R}\left(e^{itx}-1\right)c\Psi(x)M_\alpha(dx)\right),\quad t\in\mathbb{R}.
	\end{align*}
	Then, the mollified process $\tilde{X}_{c,\theta}(s)=\int_\mathbb{R}X_c(t)v(\theta(t-s))dt$ is a stationary real harmonizable $S\alpha S$ process with finite control measure $\tilde{m}_{c,\theta}(dx)=c^\alpha w_\theta^\alpha(x)\vert \Psi(x)\vert^\alpha dx$. The underlying i.i.d. random frequencies $\{Z_k\}_{k=1}^\infty$ in the corresponding LePage series representation, upon which all estimates rely, are independent of the constant $c$, since it cancels in their probability density function 
	$$\rho_{c,\theta}(x)=c^\alpha w_\theta^\alpha(x)\vert\Psi(x)\vert^\alpha\left(\int_\mathbb{R}c^\alpha w_\theta^\alpha\vert\Psi(x)\vert^\alpha dx\right)^{-1}= w_\theta^\alpha(x)\vert\Psi(x)\vert^\alpha\left(\int_\mathbb{R} w_\theta^\alpha\vert\Psi(x)\vert^\alpha dx\right)^{-1}.$$
\end{rem}

\subsection{Real harmonizable fractional stable motions}
\label{section: fractionalmotion}

We apply the results of Section \ref{section: mollifier} to the class of {real harmonizable fractional stable motions} $X^H=\{X^H(t):t\in\mathbb{R}\}$ given by 
\begin{align*}
	X^H(t)=\mathrm{Re}\left(\int_\mathbb{R}\left(e^{itx}-1\right)\vert x\vert^{-H-1/\alpha}M_\alpha(dx)\right),
\end{align*}
where $\alpha\in(0,2)$, $H\in(0,1)$ and $M_\alpha$ is a complex isotropic $S\alpha S$ random measure with Lebesgue control measure $m(dx)=dx$. 
The additional pre-knowledge of $\Psi(x)=\vert x\vert^{-H-1/\alpha}$ allows us to propose an estimation procedure, which is explicitly tailored to real harmonizable fractional stable motions. 

\begin{cor}
	\label{corollary hfsm mollified}
	Let $w,v\in L^1(\mathbb{R})\cap L^2(\mathbb{R})$ be even functions given by $w(x)=\vert x\vert^pe^{-\vert x\vert^q}$, $p,q\geq 1$, and $v=\mathcal{F}^{-1}w$.
	\begin{enumerate}[(i)]
		\item The mollified process $\tilde{X}_\theta=\{\tilde{X}_\theta(s):s\in\mathbb{R}\}$ with $\tilde{X}_\theta(s)=\int_\mathbb{R} X^H(t)v(\theta(t-s))dt$, $s\in\mathbb{R}$, $\theta>0$, is a stationary real harmonizable $S\alpha S$ process with finite control measure 
		$\tilde{m}_\theta$.
		The random frequencies $\{Z_k\}_{k\in\mathbb{N}}$ in its LePage series representation \eqref{SRH-LePage} are i.i.d. with probability density function
		\begin{align}
			\label{rho_hfsm}
			\rho_\theta(z)=\frac{q}{2}\frac{\left(\alpha\theta^{-q}\right)^{\frac{\alpha}{q}(p-H)}}{\Gamma\left(\frac{\alpha}{q}(p-H)\right)}e^{-\alpha\theta^{-q}\vert x\vert^q}\vert x\vert^{\alpha (p-H)-1},\quad z\in\mathbb{R}.
		\end{align}
		\item \label{part3 gamma}It holds that $\{\vert Z_k\vert^q\}_{k\in\mathbb{N}}$ is a sequence of  i.i.d. Gamma-distributed random variables with shape parameter $\frac{\alpha}{q}(p-H)$ and scale parameter $\alpha\theta^{-q}$, i.e. 
		\begin{align*}
			\left\vert Z_k\right\vert^q\sim\Gamma\left(\frac{\alpha}{q}\left(p-H\right),\alpha\theta^{-q}\right).
		\end{align*}
	\end{enumerate}
\end{cor}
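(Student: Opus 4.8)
The plan is to obtain part (i) as a specialization of Theorem \ref{thm: mollify} followed by an explicit evaluation of the resulting spectral density, and to derive part (ii) from part (i) by a single change of variables in the law of $|Z_k|$.

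First I would check that the concrete kernel $w(x)=|x|^pe^{-|x|^q}$ meets every hypothesis of Theorem \ref{thm: mollify}. It is manifestly even, and since $p\geq 1$ it vanishes at the origin with $w(x)=O(|x|)$ as $|x|\to 0$; the exponential factor makes $w\in L^1(\mathbb{R})\cap L^2(\mathbb{R})$. For the Sobolev condition one computes $w'(x)=\mathrm{sgn}(x)\bigl(p|x|^{p-1}-q|x|^{p+q-1}\bigr)e^{-|x|^q}$ and checks $w,w'\in L^3(\mathbb{R})$: near the origin the dominant powers $|x|^p$ and $|x|^{p-1}$ are cube-integrable because $p\geq 1$, and at infinity the exponential dominates every polynomial factor. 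Theorem \ref{thm: mollify} together with the scaling of Remark \ref{rem: scaling} then immediately gives that $\tilde{X}_\theta$ is a stationary real harmonizable $S\alpha S$ process with finite control measure $\tilde{m}_\theta(dx)=|w_\theta(x)\Psi(x)|^\alpha dx$, where $w_\theta(x)=\theta^{-1}w(x/\theta)$.

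The next step is to make the spectral density explicit. Inserting $\Psi(x)=|x|^{-H-1/\alpha}$ and $w_\theta(x)=\theta^{-(p+1)}|x|^pe^{-|x|^q/\theta^q}$ yields
\begin{align*}
	\tilde{f}_\theta(x)=|w_\theta(x)\Psi(x)|^\alpha=\theta^{-\alpha(p+1)}|x|^{\alpha(p-H)-1}e^{-\alpha\theta^{-q}|x|^q}.
\end{align*}
Since $\rho_\theta=\tilde{f}_\theta/\int_\mathbb{R}\tilde{f}_\theta$, the prefactor $\theta^{-\alpha(p+1)}$ cancels, so only the normalizing integral of $|x|^{a-1}e^{-b|x|^q}$ with $a=\alpha(p-H)$ and $b=\alpha\theta^{-q}$ remains. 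The substitution $u=|x|^q$ turns this into the Gamma integral $\tfrac{2}{q}b^{-a/q}\Gamma(a/q)$, and dividing reproduces exactly the density \eqref{rho_hfsm}.

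Finally, for part (ii) I would exploit that by the LePage representation \eqref{SRH-LePage} the frequencies $\{Z_k\}$ are i.i.d. with the even density $\rho_\theta$, so the $|Z_k|$ are i.i.d. with density $2\rho_\theta$ on $[0,\infty)$. Applying the one-dimensional transformation formula to $Y_k=|Z_k|^q$, namely evaluating $2\rho_\theta(y^{1/q})\cdot\tfrac{1}{q}y^{1/q-1}$, collapses the exponents to the density $\tfrac{b^{a/q}}{\Gamma(a/q)}y^{a/q-1}e^{-by}$ with $a=\alpha(p-H)$ and $b=\alpha\theta^{-q}$, which is precisely the Gamma law asserted in part (ii); the i.i.d. property is inherited because $y\mapsto y^q$ is a fixed measurable map applied to i.i.d. variables. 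I expect the only mildly delicate point to be the Sobolev verification at the boundary case $p=1$, where $w'$ has a bounded jump at the origin and the weak derivative must be used; everything else is the single Gamma integral and the bookkeeping of constants.
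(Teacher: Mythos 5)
Your proposal is correct and follows essentially the same route as the paper's own proof: specialize Theorem \ref{thm: mollify} (with the scaling of Remark \ref{rem: scaling}) to get the control measure $\tilde{m}_\theta(dx)=|w_\theta(x)\Psi(x)|^\alpha dx$, evaluate the normalizing constant via the substitution $u=|x|^q$ as a Gamma integral, and obtain part (ii) by the one-dimensional density transformation applied to $|Z_k|^q$, using the evenness of $\rho_\theta$. The only difference is that you verify the hypotheses of Theorem \ref{thm: mollify} explicitly (including the weak-derivative subtlety at $p=1$), whereas the paper simply asserts that $w$ satisfies them; your computations and constants agree with \eqref{rho_hfsm} and the stated Gamma law.
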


\begin{proof}
	\begin{enumerate}[(i)]
		\item \label{part i}The weight function $w$ fulfills all necessary conditions of Theorem \ref{thm: mollify}.
		It follows that $\int_{\mathbb{R}}\vert X^H(t)\vert v(t)dt<\infty$ a.s. and in particular the mollified process $\tilde{X}=\tilde{X}_{v,\theta}$ with
		\begin{align*}
			\tilde{X}(s)\stackrel{a.s.}{=}\int_{\mathbb{R}}e^{isx}w_\theta(x)\left\vert x\right\vert^{-H-1/\alpha}M_\alpha(dx),\quad s\in\mathbb{R},
		\end{align*}
		is a stationary real harmonizable $S\alpha S$ process with finite control measure 
		\begin{align*}
			\tilde{m}_\theta(dx)=\tilde{f}_\theta(x)dx=w_\theta^\alpha(x)\vert x\vert^{-\alpha H - 1} dx.
		\end{align*}
		Therefore,  it admits the LePage series representation \eqref{SRH-LePage}
		with frequencies $\{Z_k\}_{k\in\mathbb{N}}$, which are i.i.d. with probability density function $\rho_\theta=\tilde{f}_\theta\left(\int_\mathbb{R}\tilde{f}_\theta(x)dx\right)^{-1}$.
		
		Note that the spectral density $\tilde{f}$ is given by
		\begin{align*}
			\tilde{f}_\theta(x)=w_\theta^\alpha(x)\vert x\vert^{-\alpha H - 1}=\theta^{-\alpha(1+p)} e^{-\alpha\theta^{-q}\vert x\vert^q}\vert x\vert^{\alpha (p-H)-1}
		\end{align*}
		and 
		\begin{align*}
			\int_{\mathbb{R}}\tilde{f}_\theta(x) dx&=\theta^{-\alpha(1+p)} \int_{\mathbb{R}} e^{-\alpha\theta^{-q}\vert x\vert^q}\vert x\vert^{q\left(\frac{\alpha p}{q}-\frac{\alpha H}{q}-\frac{1}{q}\right)}dx\\
			&=\frac{2}{q}\theta^{-\alpha(1+p)}\frac{\Gamma\left(\frac{\alpha}{q}(p-H)\right)}{\left(\alpha\theta^{-q}\right)^{\frac{\alpha}{q}(p-H)}}
			\underbrace{\int_{0}^\infty \frac{\left(\alpha\theta^{-q}\right)^{\frac{\alpha}{q}(p-H)}}{\Gamma\left(\frac{\alpha}{q}(p-H)\right)} e^{-\alpha\theta^{-q}\eta}\eta^{\alpha-\frac{\alpha H}{q}-1}d\eta}_{=1},
		\end{align*}
		where we substituted $\eta = x^q$ in the above integration and expanded by the normalizing constant of the Gamma distribution with shape parameter $\frac{\alpha}{q}(p-H)$ and scale parameter $\alpha\theta^{-q}$.
		Consequently, this yields
		\begin{align*}
			\rho_\theta(z)=\frac{q}{2}\frac{\left(\alpha\theta^{-q}\right)^{\frac{\alpha}{q}(p-H)}}{\Gamma\left(\frac{\alpha}{q}(p-H)\right)}e^{-\alpha\theta^{-q}\vert z\vert^q}\vert z\vert^{\alpha (p-H)-1}.
		\end{align*}
	
		\item The sequence $\{\vert Z_k\vert^q\}_{k\in\mathbb{N}}$ consists of i.i.d. random variables. 
		Denote their probability density function by $g$. 
		By the density transformation theorem with transform $\varphi(z)=\vert z\vert^q$ we can compute 
		\begin{align*}
			g(z)&=\left(\rho_\theta\left(- z^{\frac{1}{q}}\right)+\rho_\theta \left( z^{\frac{1}{q}}\right)\right)\frac{1}{q}z^{\frac{1}{q}-1}\\
			&=2\frac{q}{2}\frac{\left(\alpha\theta^{-q}\right)^{\frac{\alpha}{q}(p-H)}}{\Gamma\left(\frac{\alpha}{q}(p-H)\right)}e^{-\alpha\theta^{-q}z}z^{\frac{1}{q}\left(\alpha (p-H)-1\right)}\frac{1}{q}z^{\frac{1}{q}-1}\\
			&=\frac{\left(\alpha\theta^{-q}\right)^{\frac{\alpha}{q}(p-H)}}{\Gamma\left(\frac{\alpha}{q}(p-H)\right)}e^{-\alpha\theta^{-q}z}z^{\frac{\alpha}{q}(p-H)-1},
		\end{align*}
		which is the probability density function of the Gamma distribution with shape parameter $\frac{\alpha}{q}(p-H)$ and scale parameter $\alpha\theta^{-q}$.
	\end{enumerate}
\end{proof}

Applying the periodogram method described in Section \ref{subsection: periodogram} to the mollified process $\tilde{X}_\theta$ the parameters $\alpha$ and $H$ can be estimated as follows. 
Consider the sample $(x_{\theta,1},\dots,x_{\theta,n})=(X_\theta(t_1),\dots,X_\theta(t_n))$ where $t_j=j\delta$, $j=1,\dots,n$, $\delta>0$. 
Denote by $\{\hat{Z}_{k,n}\}_{k=1}^N$ the periodogram frequency estimators of $\{Z_{[k]}\}_{k=1}^N$ as described in Section \ref{subsection: periodogram}. 

Consider an i.i.d. random sample $X_1,\dots,X_n\sim\Gamma(b,r)$, $b,r>0$.
Strongly consistent closed form estimators for the parameters $(b,r)$ are given in \cite{gammadistr}, i.e. 
\begin{align*}
 	\tilde{b}(X_1,\dots,X_N)&= \frac{N\sum_{k=1}^NX_k}{N\sum_{k=1}^NX_k\log(X_k)-\sum_{k=1}^N\log(X_k)\sum_{k=1}^NX_k},\\
 	\tilde{r}(X_1,\dots,X_N)&=\frac{N^2}{N\sum_{k=1}^NX_k\log(X_k)-\sum_{k=1}^N\log(X_k)\sum_{k=1}^NX_k}.
\end{align*}
The index of stability $\alpha$ and the Hurst parameter $H$ can therefore be estimated by 
\begin{align*}
	\tilde{\alpha}^{(N,n)} &= \theta^q\tilde{r}\left(\hat{Z}^q_{1,n},\dots,\hat{Z}^q_{N,n}\right),\\
	\tilde{H}^{(N,n)} &= p-q\frac{\tilde{b}\left(\hat{Z}^q_{1,n},\dots,\hat{Z}^q_{N,n}\right)}{\tilde{\alpha}^{(N,n)}}.
\end{align*}
\begin{cor}
	\label{cor: hfsm consistent alpha and H}
	The estimators $(\tilde{\alpha}^{N,n},\tilde{H}^{N,n} )$ are strongly consistent for $(\alpha,H)$, i.e.
	\begin{align*}
		\mathbb{P}\left(\lim_{N\rightarrow\infty}\lim_{n\rightarrow\infty}\tilde{\alpha}^{N,n}=\alpha\right)=1,\quad 
		\mathbb{P}\left(\lim_{N\rightarrow\infty}\lim_{n\rightarrow\infty}\tilde{H}^{N,n}=H\right)=1
	\end{align*}
\end{cor}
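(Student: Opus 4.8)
The plan is to combine three ingredients in the prescribed iterated-limit order $\lim_{N\to\infty}\lim_{n\to\infty}$: the distributional identification from part (ii) of Corollary~\ref{corollary hfsm mollified}, the strong consistency of the periodogram frequency estimators from \cite[Theorem~4]{viet2}, and the strong consistency of the closed-form Gamma estimators $\tilde b,\tilde r$ of \cite{gammadistr}. The algebra at the end is dictated by the identities $\alpha=\theta^q r$ and $H=p-qb/\alpha$ obtained by inverting the shape $b=\tfrac{\alpha}{q}(p-H)$ and scale $r=\alpha\theta^{-q}$ of the limiting Gamma law.

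The first step, which I expect to be the main obstacle, is to verify that the reordered sample $\{|Z_{[k]}|^q\}_{k=1}^N$ actually fed into the Gamma estimators is genuinely i.i.d.\ with the law from part (ii) of Corollary~\ref{corollary hfsm mollified}. The subtlety is that the periodogram detects frequencies ordered by peak height, i.e.\ by the amplitudes $R_{[1]}>R_{[2]}>\cdots$, not in their original index order. However, by \eqref{amplitudes R} the amplitudes $R_k$ depend only on $\{\Gamma_k\}$ and $\{G_k^{(j)}\}$, which are independent of the frequency sequence $\{Z_k\}$. I would condition on the $\sigma$-algebra generated by $\{R_k\}_{k\in\mathbb{N}}$: this fixes the ranking permutation and hence the selected indices $[1],\dots,[N]$, while the corresponding frequencies remain distributed as in the original i.i.d.\ sequence. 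Since this conditional law does not depend on the amplitudes, $(Z_{[1]},\dots,Z_{[N]})$ are unconditionally i.i.d.\ with density $\rho_\theta$ of \eqref{rho_hfsm}, so that $\{|Z_{[k]}|^q\}_{k=1}^N$ are i.i.d.\ $\Gamma\bigl(\tfrac{\alpha}{q}(p-H),\alpha\theta^{-q}\bigr)$.

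Next I would pass the inner limit $n\to\infty$ for fixed $N$. By \cite[Theorem~4]{viet2}, $\hat Z_{k,n}\to|Z_{[k]}|$ a.s.\ as $n\to\infty$ for each $k=1,\dots,N$, hence $\hat Z_{k,n}^q\to|Z_{[k]}|^q$ a.s. Since $\tilde b$ and $\tilde r$ are continuous functions of their $N$ arguments on the set where the common denominator $N\sum_k X_k\log X_k-\sum_k\log X_k\sum_k X_k$ is nonzero (which holds a.s.\ at the limiting Gamma sample), the continuous mapping theorem for almost-sure convergence gives
\begin{align*}
	\lim_{n\to\infty}\tilde r\bigl(\hat Z_{1,n}^q,\dots,\hat Z_{N,n}^q\bigr)&=\tilde r\bigl(|Z_{[1]}|^q,\dots,|Z_{[N]}|^q\bigr),\\
	\lim_{n\to\infty}\tilde b\bigl(\hat Z_{1,n}^q,\dots,\hat Z_{N,n}^q\bigr)&=\tilde b\bigl(|Z_{[1]}|^q,\dots,|Z_{[N]}|^q\bigr)
\end{align*}
almost surely, and correspondingly $\lim_{n\to\infty}\tilde\alpha^{(N,n)}=\theta^q\tilde r(|Z_{[1]}|^q,\dots,|Z_{[N]}|^q)$ and an analogous expression for $\tilde H^{(N,n)}$.

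Finally I would take $N\to\infty$. Applying the strong consistency of $\tilde b,\tilde r$ from \cite{gammadistr} to the i.i.d.\ Gamma sample established in the first step yields $\tilde r\to\alpha\theta^{-q}$ and $\tilde b\to\tfrac{\alpha}{q}(p-H)$ a.s. Substituting gives $\theta^q\tilde r\to\alpha$, so that the iterated limit of $\tilde\alpha^{(N,n)}$ equals $\alpha$ a.s., while $\tilde H^{(N,n)}=p-q\,\tilde b/\tilde\alpha^{(N,n)}\to p-q\cdot\tfrac{\alpha}{q}(p-H)/\alpha=H$ a.s. The first step is the only nontrivial one; the remaining arguments are routine applications of the continuous mapping theorem and the two cited consistency results.
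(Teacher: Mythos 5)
Your proposal is correct and follows essentially the same route as the paper's own proof: strong consistency of the periodogram frequency estimates from \cite[Theorem 4]{viet2}, the i.i.d.\ Gamma law of $\{\vert Z_{[k]}\vert^q\}_{k=1}^N$ from Corollary \ref{corollary hfsm mollified}(ii), strong consistency of the closed-form Gamma estimators from \cite{gammadistr}, and the continuous mapping theorem applied in the iterated-limit order $\lim_{N\rightarrow\infty}\lim_{n\rightarrow\infty}$. Your first step --- justifying that the amplitude-ordered frequencies $(Z_{[1]},\dots,Z_{[N]})$ remain i.i.d.\ with density $\rho_\theta$ because the ranking permutation is measurable with respect to $\sigma(\{\Gamma_k\},\{G_k^{(j)}\})$ and hence independent of $\{Z_k\}$ --- is a point the paper's proof takes for granted without comment, so making it explicit is a worthwhile refinement rather than a departure.
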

\begin{proof}
	The strong consistency of the periodogram frequency estimators $\{\hat{Z}_{k,n}\}_{k=1}^N$ for $\{\vert Z_{[k]}\vert\}_{k=1}^N$ was proven in \cite[Theorem 4]{viet2}. Clearly, by the continuity of the mapping $z\mapsto \vert z\vert ^q$ it follows that $\{\hat{Z}^q_{k,n}\}_{k=1}^N$ are strongly consistent for $\{ \vert Z\vert^q_{[k]}\}_{k=1}^N$, which are i.i.d. Gamma distributed with shape parameter $\alpha-\frac{\alpha H}{q}$ and rate $\alpha\theta^{-q}$ by Corollary \ref{corollary hfsm mollified} (\ref{part3 gamma}). Solving for $\alpha$ and $H$ and applying the continuous mapping theorem once again concludes the proof. 
\end{proof}
The choice of the parameters $p,q\geq 1$ is discussed in Remark \ref{rem: choice p q} in the next section.

\section{Simulation study}
\label{section: numerical}
In this section, we consider estimation based on a single path of a real harmonizable fractional stable motion ("real HFSM" in the following) $X^H$ as in Section \ref{section: fractionalmotion}.
Let $(x_1,\dots,x_n)=(X^H(t_1),\dots,X^H(t_n))$ be a sample of a single path of $X^H$, where the points $\{t_j\}_{j=1}^n$ are chosen equidistantly with $t_j=j\delta$, $j = 1,\dots,n$, and mesh size $\delta>0$.
The simulation is performed via the discretization of the integral representation of $X^H$. 
Realizations of $X^H$ for $\alpha\in\{0.75,1.5\}$ and $H\in\{0.25,0.75\}$ are given in Figure \ref{fig: rhfsm realizations} with the same simulation seed being used in each realization.
As it is the case for the classical fractional Brownian motion, the Hurst parameter $H$ determines the roughness of the paths. 
Additionally,  the smaller the index of stability $\alpha$ the higher the fluctuations as well.  
\begin{figure}
	\centering
	\begin{subfigure}{0.245\textwidth}
		\includegraphics[width=1.1\textwidth]{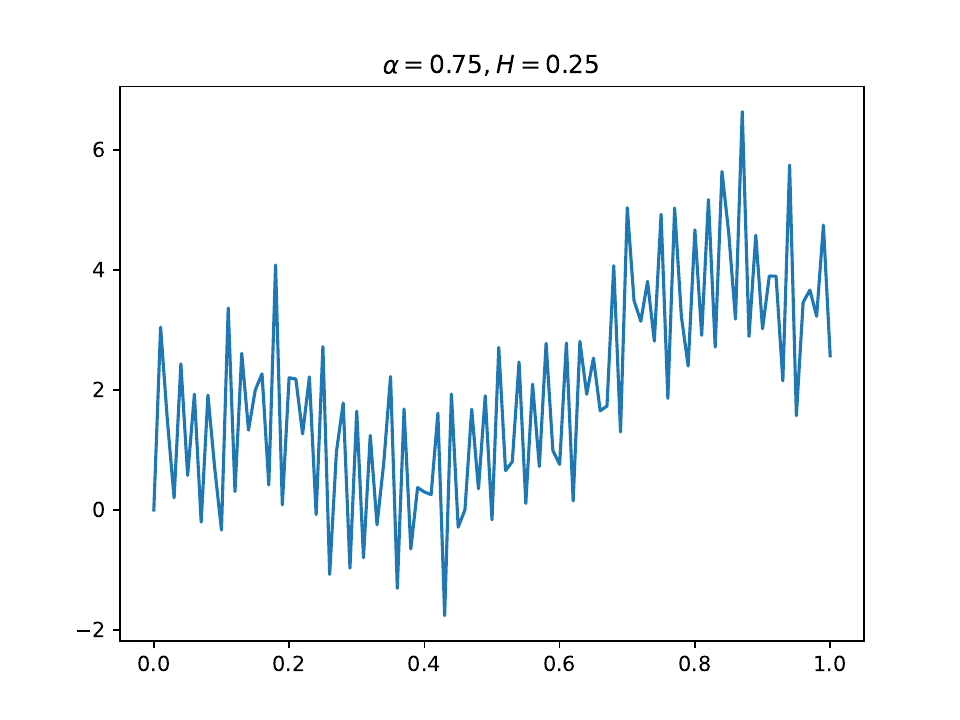}
		\caption{$\alpha=0.75,H=0.25$}
	\end{subfigure}
	\hfill
	\begin{subfigure}{0.245\textwidth}
		\includegraphics[width=1.1\textwidth]{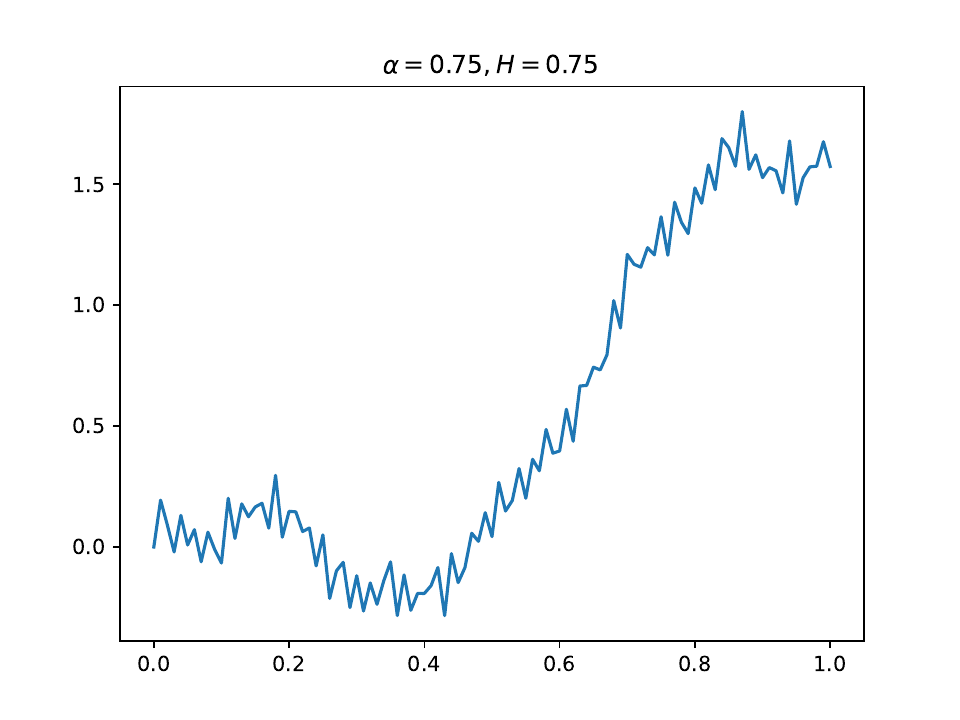}
		\caption{$\alpha=0.75,H=0.75$}
	\end{subfigure}
	\hfill
	\begin{subfigure}{0.245\textwidth}
		\includegraphics[width=1.1\textwidth]{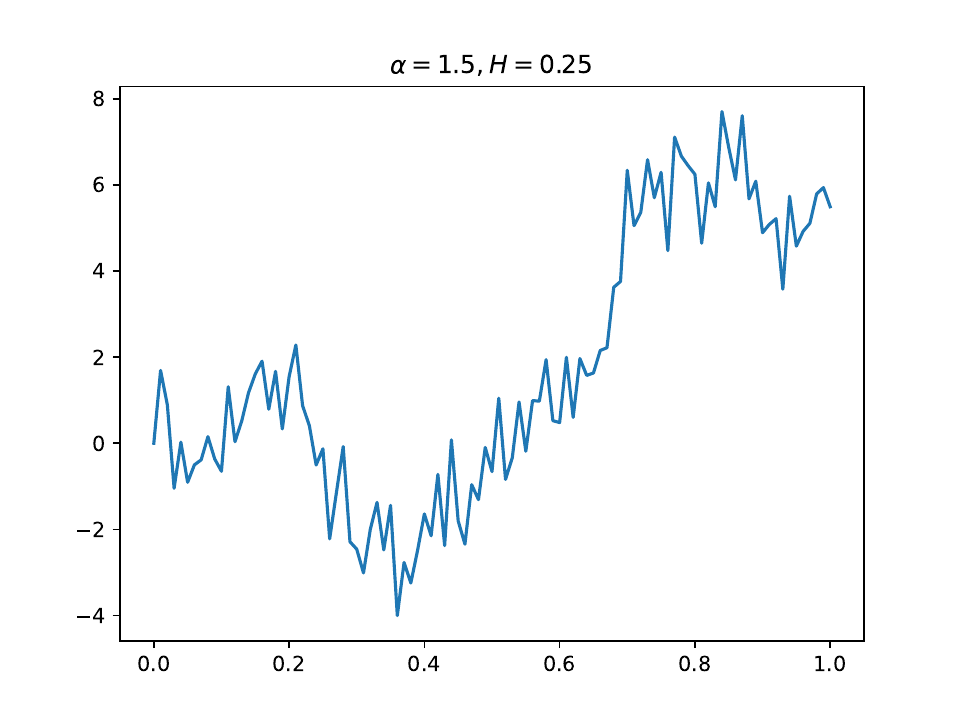}
		\caption{$\alpha=1.5,H=0.25$}
	\end{subfigure}
	\hfill
	\begin{subfigure}{0.245\textwidth}
		\includegraphics[width=1.1\textwidth]{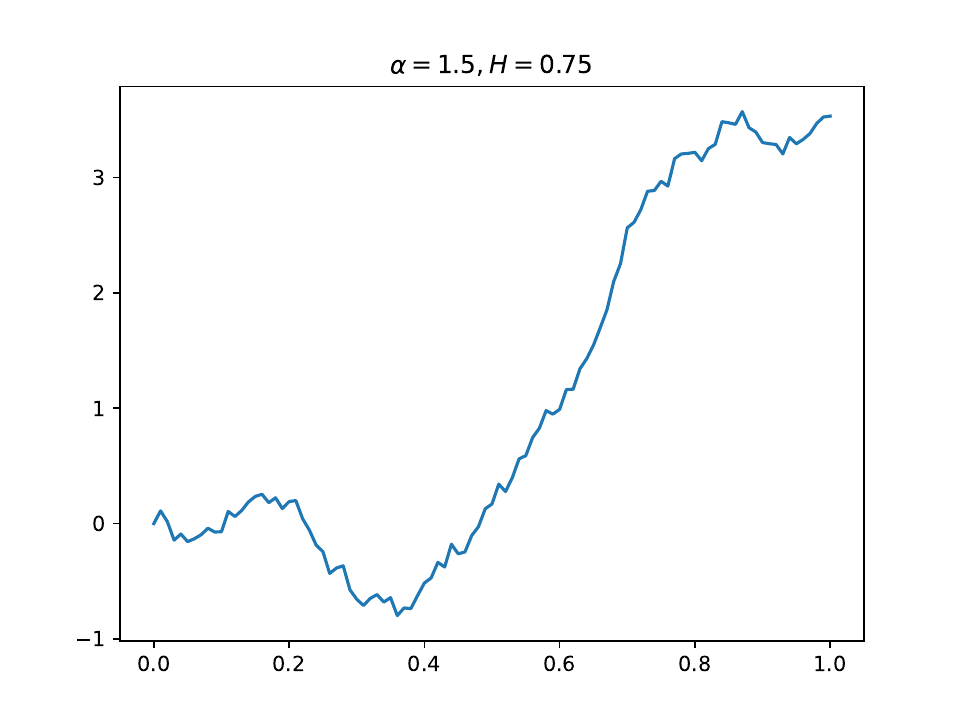}
		\caption{$\alpha=1.5,H=0.75$}
	\end{subfigure}
	\caption{Realization of a real HFSM for $\alpha\in\{0.75,1.5\}$ and $H\in\{0.25,0.75\}$ on $[0,1]$. The simulations are based on the same seed to emphasize the effect of $\alpha$ and $H$ on the paths' roughness.}
	\label{fig: rhfsm realizations}
\end{figure}

\begin{ex}
	\label{numex 1}
	Consider the even weight function $w(x)=\vert x\vert^pe^{-\vert x\vert^q}$, $p,q\geq 1$ of Corollary \ref{corollary hfsm mollified}. Setting the exponent $q=2$ in the exponential term of $w$ guarantees its (inverse) Fourier transform can be given explicitly. In particular, it holds that $v=\mathcal{F}^{-1}w$  with $w(x)=\vert x\vert^pe^{-x^2}$, $p\geq 1$, is of the form
	\begin{align*}
		v(t)=\frac{1}{2\pi}\Gamma\left(\frac{1+p}{2}\right){}_1F_1\left(\frac{1+p}{2};\frac{1}{2};-\frac{t^2}{4}\right),
	\end{align*} 
	where ${}_1F_1$ is a generalized hypergeometric series defined by ${}_1F_1(a;b;z)=\sum_{n=1}^\infty\frac{a^{(n)}z^n}{b^{(n)}n!}$, $z\in\mathbb{C}$, and rising factorials $a^{(0)}=1$, $a^{(n)}=a(a+1)\dots (a+n-1)$.
	
	In particular, for $p=2,4$ this yields
	\begin{align}
		\label{w1v1}
		w^{(1)}(x)=x^2e^{-x^2}, \quad v^{(1)}(t)=\frac{1}{8\sqrt\pi}e^{-\frac{t^2}{4}}\left(2-t^2\right)
	\end{align}
	and 
	\begin{align}
		\label{w2v2}
		w^{(2)}(x)=x^4e^{-x^2}, \quad v^{(2)}(t)=\frac{1}{32\sqrt\pi}e^{-\frac{t^2}{4}}\left(12-12t^2+t^4\right).
	\end{align}
	The choice of $p,q\geq 1$ and their effect on the asymptotic variance of the estimators $\tilde{b},\tilde{r}$ of the Gamma distribution are discussed in more detail in Remark \ref{rem: choice p q}.
\end{ex}

We compute samples of the path of the mollified process $\tilde{X}_{v,\theta}$ via discrete convolution. 
Fix an even number of discretization points $L\in\mathbb{N}$ of $v_\theta$.
Recall that the function $v_\theta$ is even, hence consider its discretization $(v_{\theta,0},\dots,v_{\theta,L})$ on the interval $[-\delta L/2,\delta L/2]$ with $v_l = v\left(\delta\left(-\frac{L}{2}+l\right)\right)$, $l=0,\dots,L$.
The convolution operation is performed only where the samples $(x_0,\dots,x_n)$ and $(v_{\theta,0},\dots,v_{\theta,L})$ fully overlap (in time), i.e. compute $(\tilde{x}_{\theta,0},\dots,\tilde{x}_{\theta,n-L})$ with 
\begin{align}
	\label{eq: discrete convolution}
	\tilde{x}_{\theta,k} = \delta \sum_{l=0}^Lx_{k-l}v_l\approx\tilde{X}_{v,\theta}\left(\delta\left(\frac{L}{2}+k\right)\right),\quad k=0,\dots,n-L.
\end{align}
For the numerical computation we use Python's \texttt{scipy.signal.convolve} function and its \texttt{mode} parameter set to \texttt{'valid'}.

The number $L$ should be chosen such that $n-L$ is still large enough for periodogram computations, i.e. $n-L\geq n_0$ for some desired minimal sample size $n_0$. 
Furthermore, the tuning parameter $\theta$ should be chosen such that the discretization $(v_{\theta,0},\dots,v_{\theta,L})$ on the points $\{\delta(-L/2+j)\}_{j=0}^L$ approximate $v_\theta$ well enough and $\vert v_\theta(t)\vert<\varepsilon$, $t\not\in[-\delta L/2,\delta L/2]$, for some small $\varepsilon>0$. 
For the discretization of $v_\theta$ we set $L=100$, $\delta=0.01$, which yields the interval $[-0.5,0.5]$ on which $v_\theta$ is discretized. 
Setting $\theta_0=20$ guarantees that $v_\theta$ is negligible outside of $[-0.5,0.5]$.

\subsection{Regression based estimation of $\alpha$}
Figure \ref{fig: regression type alpha estimation} visualizes the regression based estimation of the index of stability $\alpha$ as described in Section \ref{section: stat incr proc}. 
Subfigure (a) shows the true $\rho_{\theta_i}$ (dashed lines) against its estimates $\hat\rho_{\theta_i}$ (solid lines), $i=1,2$, where $\theta_1=20$, $\theta_2=30$. 
The estimation is based on the simulation of a single path of $X^H$ with $(\alpha,H)=(1.5,0.75)$ equidistant sample points with mesh size $\delta=0.01$ and $n=10000$. 
This path is then mollified twice with $v^{(1)}$ of Example \ref{numex 1}, once with $\theta_1$ and once with $\theta_2$.

Subfigure (b) shows the ratio $\hat\rho_{\theta_1}/\hat\rho_{\theta_2}$ against the true ratio $\rho_{\theta_1}/\rho_{\theta_2}$. 
Since it holds that $\rho_{\theta_1}/\rho_{\theta_2} = C (w_{\theta_1}/w_{\theta_2})^\alpha$, where $C$ is a constant and $w_{\theta_1}/w_{\theta_2}$ is known, it is possible to deduce the general shape of the true ratio $\rho_{\theta_1}/\rho_{\theta_2}$. 
Stronger deviations from this shape such as the downward kink of $\hat\rho_{\theta_1}/\hat\rho_{\theta_2}$ near the origin and the jump at its right tail can be explained by the fact that 
both $\rho_{\theta_1}$ and $\rho_{\theta_2}$ vanish at the origin at their tails, combined with inaccuracy of kernel density estimation at those ends. 

Before computing the log-transformed ratio of $\log(\hat\rho_{\theta_1}(z)/\hat\rho_{\theta_2}(z))$, we restrict ratio to the interval $[b_l,b_u]$, where the lower bound $b_l=1$ was chosen to exclude fluctuations near the origin and $b_u$ is chosen automatically such that there are no jumps at the right tail of the ratio and the ratio is larger than the threshold $\varepsilon = 0.05$. 
A visualization of the linear dependency in $\alpha$ (cf. Equation \eqref{linearregression}) can be seen in Subfigure (c). In this example, linear regression yields the estimate $\hat\alpha = 1.467$.

\begin{figure}[h]
	\centering
	\begin{subfigure}{0.325\textwidth}
		\includegraphics[width=\textwidth]{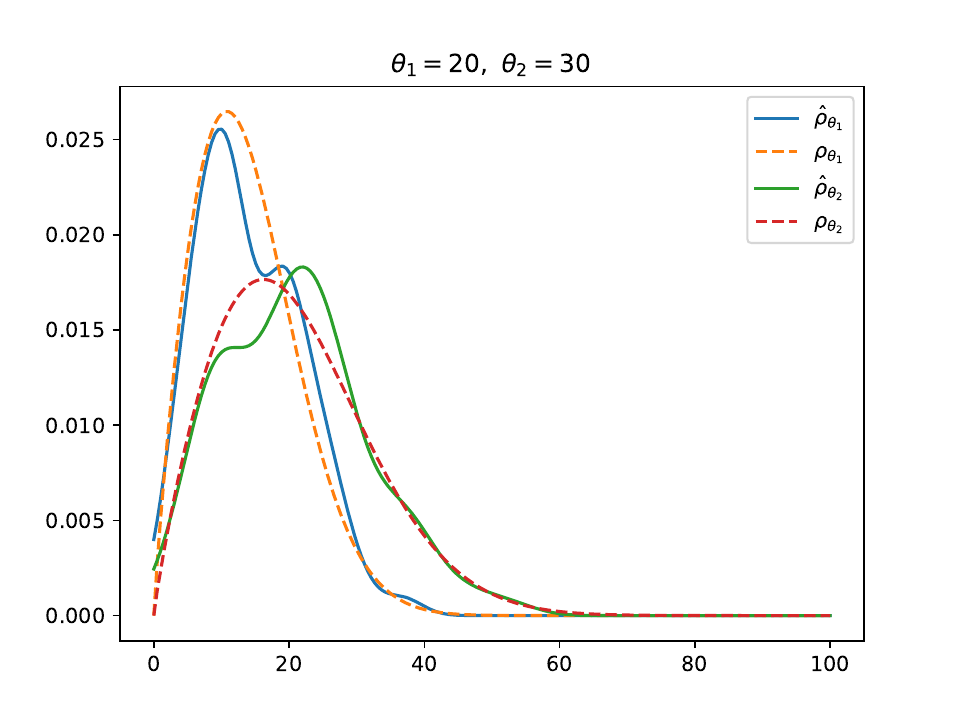}
		\caption{Spectral density estimation with $\theta_1=20$ (blue and orange)  and $\theta_2=30$ (green and red). Solid lines are the estimates.}
	\end{subfigure}
	\hfil
	\begin{subfigure}{0.325\textwidth}
		\includegraphics[width=\textwidth]{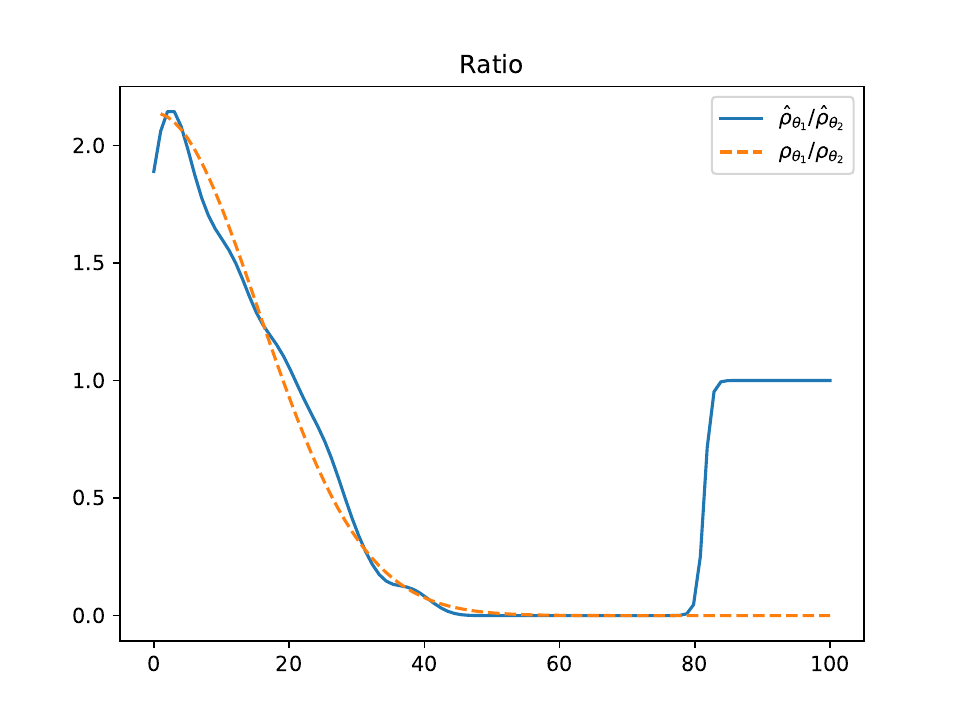}
		\caption{True ratio $\rho_{\theta_1}/\rho_{\theta_2}$ (orange dashed line) and estimated ratio $\hat\rho_{\theta_1}/\hat\rho_{\theta_2}$ (blue line) over the interval $[0,100]$.}
	\end{subfigure}
	\hfil
	\begin{subfigure}{0.325\textwidth}
		\includegraphics[width=\textwidth]{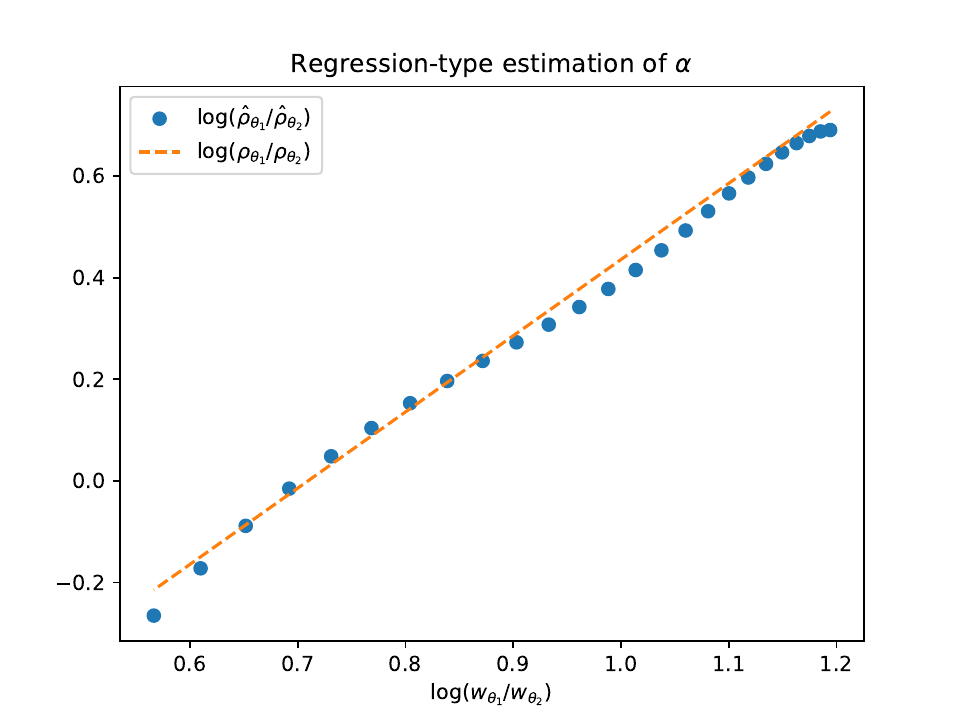}
		\caption{True log-ratio $\log(\rho_{\theta_1}/\rho{\theta_2})$ (orange dashed line) and estimated log-ratio $\log(\hat{\rho}_{\theta_1}/\hat{\rho}_{\theta_2})$ (blue dots).}
	\end{subfigure}
	\caption{Estimation of the index of stability $\alpha$ based on a single path of $X^H$ with $\alpha =1.5$ sampled equidistantly with $\delta = 0.01$, $n=10000$. The dashed lines (orange and red)  are the true normalized spectral densities $\rho_{\theta_i}$, $i=1,2$, whereas the solid lines (blue and green) are the estimates $\hat\rho_{\theta_i}$, $i=1,2$.}
	\label{fig: regression type alpha estimation}
\end{figure}

In Table \ref{table2} we give the mean bias of the regression estimator  computed from 1000 estimations of $\alpha$ based on 1000 single paths.
The accuracy of the estimator increases with increasing sample size and consequently increasing number of frequencies $N$ used for kernel density estimation. 
Naturally, the estimator performs better the better the kernel density estimation of $\rho_\theta$, which is reflected by the performance in the case $\alpha=1.5$ compared to $\alpha=0.75$. 
Note that for the computation of the mean bias, estimates $\hat\alpha\not\in(0,2)$ were excluded, see \ref{appendix tabfig} Table \ref{table2_add}. 
For instance in the case $\alpha=1.5$ with smoothing by $v^{(1)}$, sample size $n=10^4$ and number of estimated frequencies $N=200$ the number of outliers was 38 out of 1000. 
Upon closer inspection, in each of those cases high fluctuations at the tail of the estimated ratio $\hat\rho_{\theta_1}(z)/\hat\rho_{\theta_2}(z)$ were the main problem. 
A more conservative choice of the lower bound $b_l$ and the threshold $\varepsilon$ or manually adjusting the bounds of the interval on which the log ratio is computed will lead to a significant improvement of the estimate $\hat\alpha$.

\begin{table}[h]
	\centering
	\renewcommand{\arraystretch}{1.25}
	\setlength{\tabcolsep}{3pt}
	\begin{tabular}{|c|c|c||ccc|ccc|}
		\hline
		\multirow{4}{*}{\shortstack{Mean bias \\$\hat{\alpha}-\alpha$}}&\multirow{4}{*}{$\alpha$}&$\delta$&\multicolumn{6}{c|}{$0.01$}\\
		\cline{3-9}
		&&$n$&\multicolumn{3}{c|}{$10^3$}&\multicolumn{3}{c|}{$10^4$}\\
		\cline{3-9}
		&&\multirow{2}{*}{\diagbox{$H$}{$N$}}&\multirow{2}{*}{10}&\multirow{2}{*}{25}&\multirow{2}{*}{40}&\multirow{2}{*}{100}&\multirow{2}{*}{150}&\multirow{2}{*}{200}\\
		&&&&&&&&\\
		\hline\hline
		\multirow{4}{*}{$w^{(1)}_{20}$}&\multirow{2}{*}{$0.75$}&$0.25$&-0.056&0.136&-0.078&0.084&0.145&0.158\\
		&&$0.75$&-0.142&0.100&-0.101&0.081&0.109&0.143\\
		\cline{2-9}
		&\multirow{2}{*}{$1.5$}&$0.25$&-0.338&-0.221&-0.577&-0.009&-0.001&-0.032\\
		&&$0.75$&-0.467&-0.208&-0.615&-0.042&-0.003&-0.032\\
		\hline
		\multirow{4}{*}{$w^{(2)}_{20}$}&\multirow{2}{*}{$0.75$}&$0.25$&0.021&0.163&0.028&0.115&0.145&0.180\\
		&&$0.75$&0.041&0.158&0.003&0.113&0.161&0.173\\
		\cline{2-9}
		&\multirow{2}{*}{$1.5$}&$0.25$&-0.537&-0.259&-0.490&-0.161&-0.104&-0.065\\
		&&$0.75$&-0.483&-0.238&-0.529&-0.095&-0.045&-0.053\\
		\hline
	\end{tabular}
	\caption{Mean bias of the regression estimator $\hat\alpha$ to $\alpha$ based on on the log-transformed ratio $\hat\rho_{\theta_1}/\hat\rho_{\theta_2}$ with $\theta_1=20$, $\theta_2 = 30$. Computations are based on $1000$ single path simulations and spectral density estimation for each path. Each path is sampled equidistantly with mesh size $\delta = 0.01$ and sample size $n=1000$ and $n=10000$, respectively. }
	\label{table2}
\end{table}

\subsection{Analysis of the kernel density estimator $\hat\rho_\theta$}

The results in Figure \ref{fig: spectral_est_1} are based on a single path of a real HFSM, with $(\alpha,H)=(0.75,0.25)$ in Subfigures (a), (b) and once with  $(\alpha,H)=(1.5,0.75)$ in Subfigures (c), (d). 
Each path was sampled equidistantly with mesh size $\delta = 0.01$ and sample size $n=1000$ in (a), (c) and $n=10000$ in (b), (d). 
Convolution was then performed with $v^{(1)}$ of Example \ref{numex 1}, where $L=100$ and $\theta=20$ resulting in the sample $\tilde{x}=(\tilde{x}_{\theta,0},\dots,\tilde{x}_{\theta, n-L})$.
Choose the number of frequencies $N$ estimated from the periodogram of $\tilde{x}$ such that $N^{2/5}/(n-L)\leq\varepsilon$ for some small error $\varepsilon>0$. Here, for $n-L=900$, we set $N=25$ and $\varepsilon = 0.005$ for (a), (c). See \cite{viet2} for a more detailed account on the choice of $N$. 

When increasing the sample size to $n=10000$, the relation $N^{2/5}/(n-L)\leq\varepsilon$ allows us to increase the number of frequencies to be estimated to $N=150$, while keeping everything else unchanged. 
This leads to an improvement of the estimation results, see Subfigures (b) and (d).
For the kernel density estimator $\hat\rho_\theta$ we use the Gaussian kernel function with Silverman's rule for bandwidth selection.
The blue line shows the estimator $\hat{\rho}_\theta$ and the orange dashed line shows the true normalized spectral density $\rho_\theta$ of the mollified process $\tilde{X}_\theta$. 
\begin{figure}[h]
	\centering
	\begin{subfigure}{0.49\textwidth}
		\includegraphics[width=\textwidth]{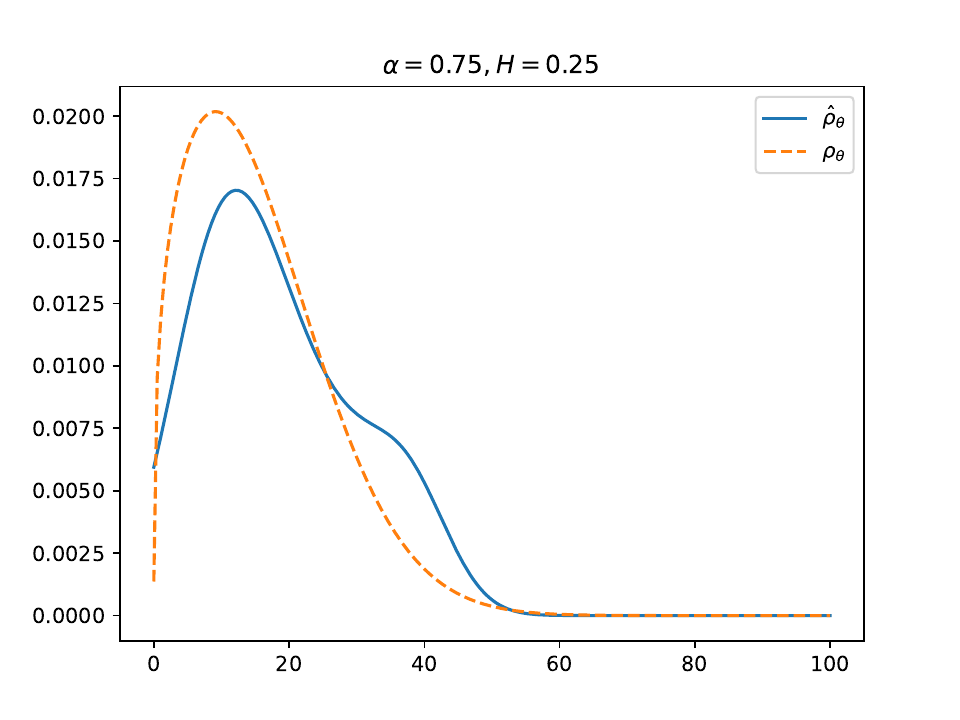}
		\caption{$(\alpha,H) = (0.75,0.25), n=10^3$}
	\end{subfigure}
	\hfill
	\begin{subfigure}{0.49\textwidth}
		\includegraphics[width=\textwidth]{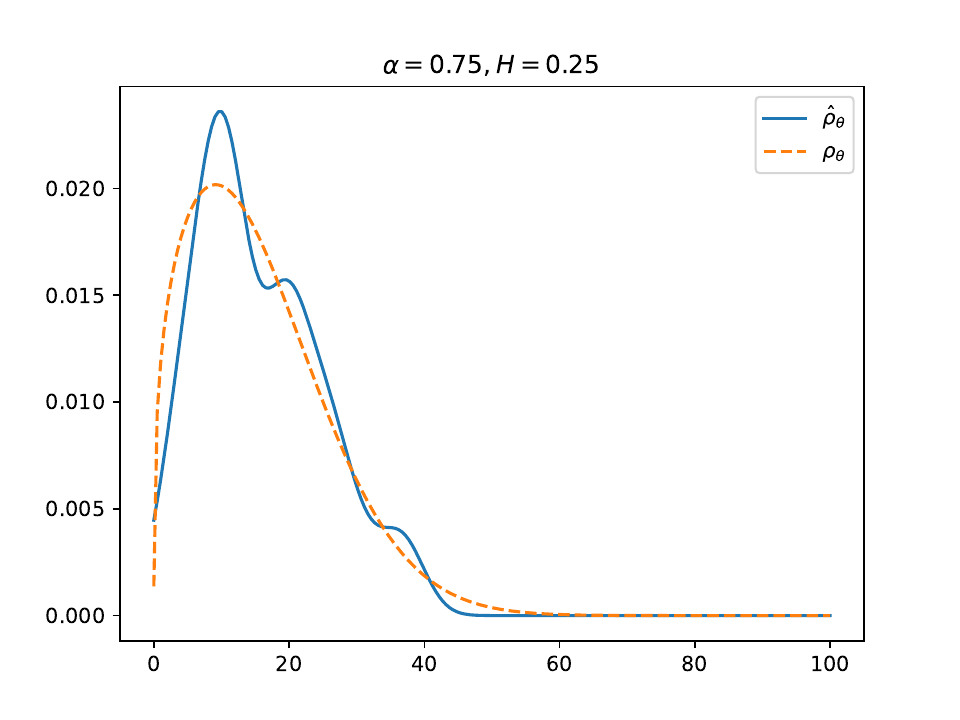}
		\caption{$(\alpha,H) = (0.75,0.25), n=10^4$}
	\end{subfigure}
	\hfill
	\begin{subfigure}{0.49\textwidth}
		\includegraphics[width=\textwidth]{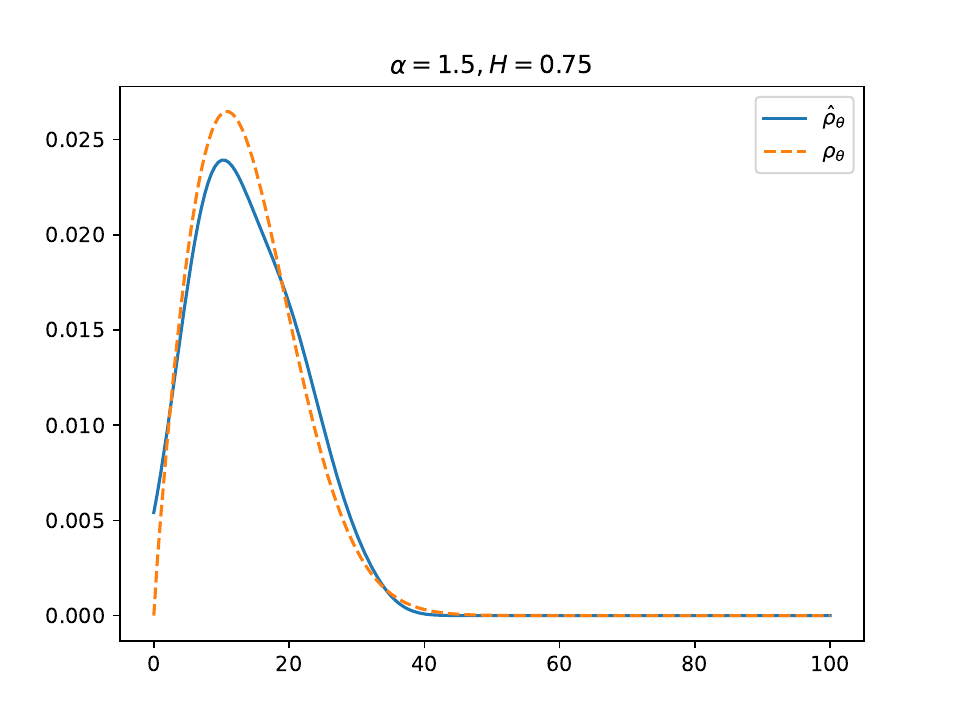}
		\caption{$(\alpha,H) = (1.5,0.75), n=10^3$}
	\end{subfigure}
	\hfil
	\begin{subfigure}{0.49\textwidth}
		\includegraphics[width=\textwidth]{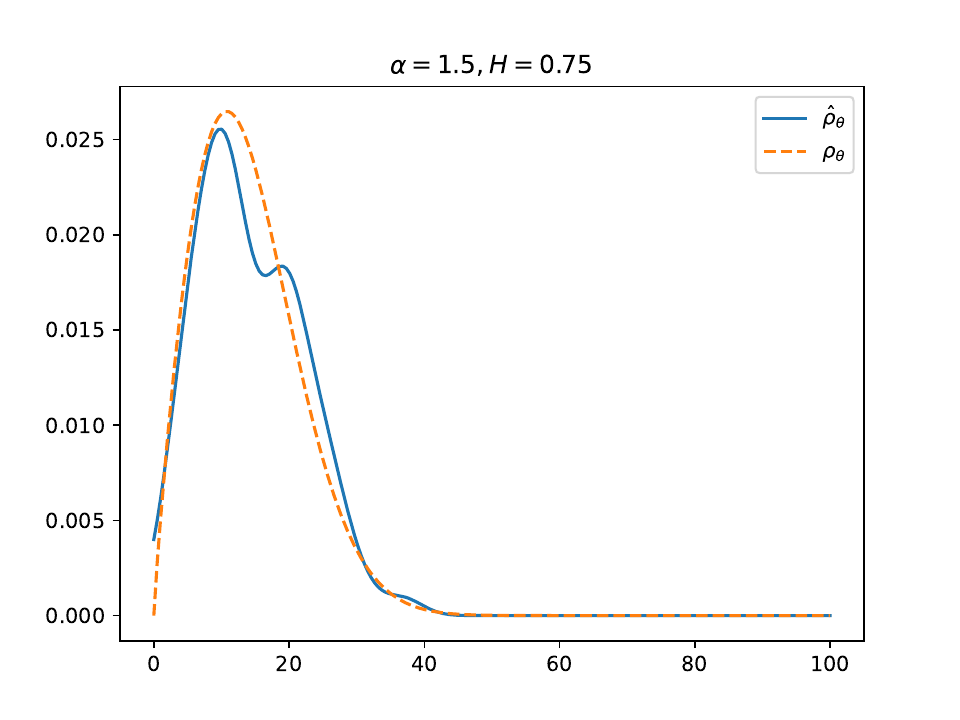}
		\caption{$(\alpha,H) = (1.5,0.75), n=10^4$}
	\end{subfigure}
	\caption{Spectral density estimation for the mollified process for the cases $(\alpha,H)=(0.75,0.25)$ in Subfigures (a) and (b) as well as $(\alpha=1.5,0.75)$ in (c) and (d), each based on a single path of a real HFSM $X^H$, respectively.}
	\label{fig: spectral_est_1}
\end{figure}

\begin{table}[h]
	\centering
	\renewcommand{\arraystretch}{1.25}
	\setlength{\tabcolsep}{3pt}
	\begin{tabular}{|c|c|c||ccc|ccc|}
		\hline
		\multirow{4}{*}{\shortstack{Mean squared $L^2$-dist.\\$\Vert\hat{\rho}-\rho\Vert_2^2$ ($\times 10^{-3}$)}}&\multirow{4}{*}{$\alpha$}&$\delta$&\multicolumn{6}{c|}{$0.01$}\\
		\cline{3-9}
		&&$n$&\multicolumn{3}{c|}{$10^3$}&\multicolumn{3}{c|}{$10^4$}\\
		\cline{3-9}
		&&\multirow{2}{*}{\diagbox{$H$}{$N$}}&\multirow{2}{*}{10}&\multirow{2}{*}{25}&\multirow{2}{*}{40}&\multirow{2}{*}{100}&\multirow{2}{*}{150}&\multirow{2}{*}{200}\\
		&&&&&&&&\\
		\hline\hline
		\multirow{4}{*}{$w^{(1)}_{20}$}&\multirow{2}{*}{$0.75$}&$0.25$&7.434&1.853&1.146&7.324&4.629&3.170\\
		&&$0.75$&7.687&2.656&2.690&7.662&4.866&3.493\\
		\cline{2-9}
		&\multirow{2}{*}{$1.5$}&$0.25$&2.499&0.690&1.213&0.922&0.515&0.357\\
		&&$0.75$&2.449&0.701&1.383&0.908&0.516&0.374\\
		\hline
		\multirow{4}{*}{$w^{(2)}_{20}$}&\multirow{2}{*}{$0.75$}&$0.25$&7.670&2.276&1.185&7.546&4.943&3.564\\
		&&$0.75$&7.834&2.184&1.097&7.637&4.965&3.569\\
		\cline{2-9}
		&\multirow{2}{*}{$1.5$}&$0.25$&2.384&0.673&1.136&0.931&0.514&0.358\\
		&&$0.75$&2.355&0.667&1.133&0.921&0.509&0.350\\
		\hline
	\end{tabular}
	\caption{Mean squared $L^2$-distances between kernel density estimators $\hat\rho_\theta$ and normalized spectral density $\rho_\theta$ over the interval $[0,100]$. Computations are based on $1000$ single path simulations and spectral density estimation for each path.. Each path is sampled equidistantly with mesh size $\delta = 0.01$ and sample sizes $n=1000$ and $n=10000$, respectively. }
	\label{table1}
\end{table}

In Table \ref{table1} we compute the mean squared $L^2$-distance $\Vert \hat\rho_{\theta}-\rho_\theta\Vert_2^2$ on the interval $[0,100]$. For this a single path of $X^H$ with $\alpha=0.75,1.5$ and $H=0.25,0.75$ was simulated 1000 times and sampled equidistantly with mesh size $\delta=0.01$.
For the sample size we consider the cases $n=1000$ and $n=10000$. 
Each sample path is smoothed with $v^{(i)}$, $i=1,2$,  from Example \ref{numex 1} and $\theta=20$, $L=100$. 
Then, $N$ frequencies are estimated from the periodogram of the sample path of the mollified process. 

We see an improvement in accuracy, the higher the sample size $n$ and the number of frequencies $N$ used for kernel density estimation are.
The smaller the index of stability and the Hurst parameter, the more difficult the estimation of $\rho$.  
This is to be expected as the path behavior becomes more erratic with decreasing $\alpha$ and $H$.
Upon further inspection we discover that the high values for the $L^2$-distance appearing in particular for the cases $n=1000$ and $\alpha=0.75$ arise from outliers in the estimation of $\rho$. 
Excluding the effect of these outliers we give the median $L^2$-distance under the same setting as Table \ref{table1} in  \ref{appendix tabfig}, Table \ref{table1_median}.

Lower accuracy of the spectral density estimation for the case $\alpha=0.75$ compared to $\alpha=1.5$ can be explained by the following.
The deviations between $\rho_\theta$ and $\hat\rho_\theta$ are due to the fact that the smaller $\alpha$ the faster the decay of the sequence $\{\Gamma_k^{-1/\alpha}\}_{k=1}^\infty$ in the LePage series representation of $\tilde{X}_\theta$, see Corollary \ref{corollary hfsm mollified}. 
As a consequence, only a few amplitudes $R_k$, see Equation \eqref{amplitudes R}, dominate all the remaining amplitudes in size. 
Due to the nature of the discrete Fourier transform, which the periodogram is based on, the periodogram will oscillate around the frequencies corresponding to the dominating amplitudes, which leads to difficulties in the estimation of the absolute frequencies $\vert Z_k\vert$. These oscillations are also called sidelobes and are addressed in \cite{hannan2,viet2,hannan}. 
 
In particular, the case $(\alpha,H)=(0.75,0.75)$ has the problem that the true density $\rho_\theta$ does not vanish at the origin.
In general, for a real HFSM this is the case when $\alpha(p-H)-1<0$, where $p\geq 1$ is the exponent in $w(x)=\vert x\vert^pe^{-\vert x\vert^q}$, which might happen if $\alpha$ is relatively small. 
Therefore, it is more likely that the random frequencies $Z_k$ take values in a neighborhood of $0$ compared to the case of $\rho_\theta$ vanishing at the origin. 
This can cause problems for the frequency estimation as it is hard to detect frequencies close to $0$ via the periodogram \cite[Chapter 3.5]{hannan}.
To counter this, consider increasing the rate of decay of $w$ at the origin by increasing $p$ as for example $p=4$ in the function $w^{(2)}$, see Example \ref{numex 1} and the corresponding smoothing function $v^{(2)}$. 

Figure \ref{fig: small alpha} compares estimation results for $\rho_\theta$ based on the same path realization of $X^H$ with $(\alpha,H)=(0.75,0.75)$ but once mollified with $v^{(1)}$ in Subfigure (a) and once with $v^{(2)}$ in Subfigure (b).
In Subfigure (a) with $p=2$ it holds that $\rho_\theta(z)\sim \vert z\vert^{-1/4} $ as $z\rightarrow 0$, where as in the case of Subfigure (b) we have $\rho_\theta(z)\sim \vert z\vert^{3/4}$ as $z\rightarrow 0$. 
In Subfigure (a) we see that the estimator $\hat{\rho}_\theta$ strongly deviates from $\rho_\theta$ close to the origin due to the aforementioned issue of small frequency estimation. 
This is not apparent in Subfigure (b) anymore. 
\begin{figure}[h]
	\centering
	\begin{subfigure}{0.49\textwidth}
		\includegraphics[width=\textwidth]{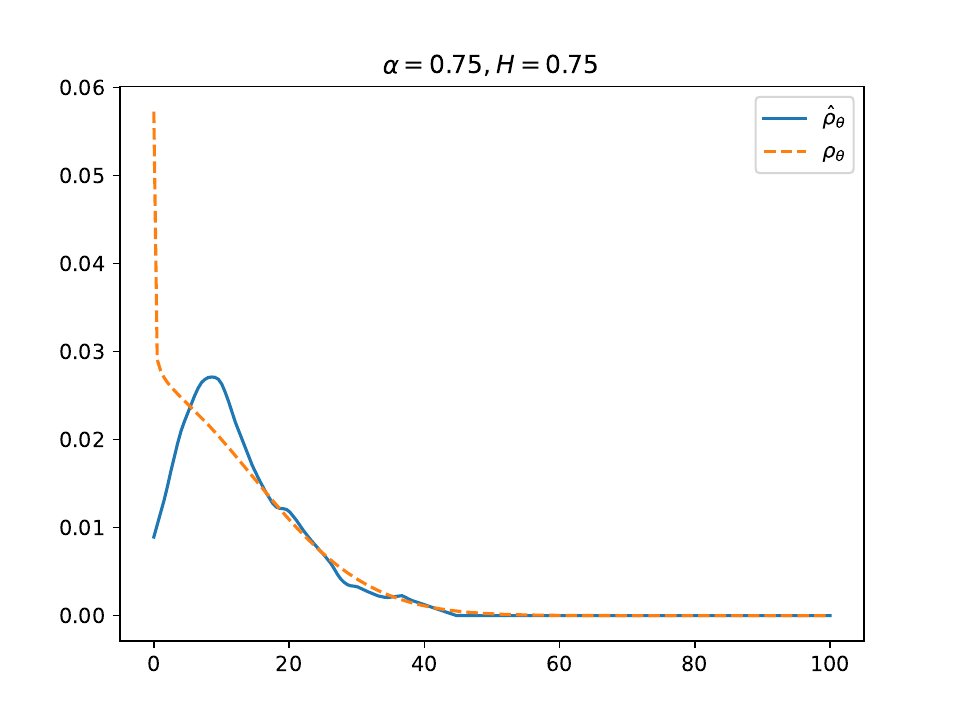}
		\caption{Smoothed by $w^{(1)}, v^{(1)}$}
	\end{subfigure}
	\hfill
	\begin{subfigure}{0.49\textwidth}
		\includegraphics[width=\textwidth]{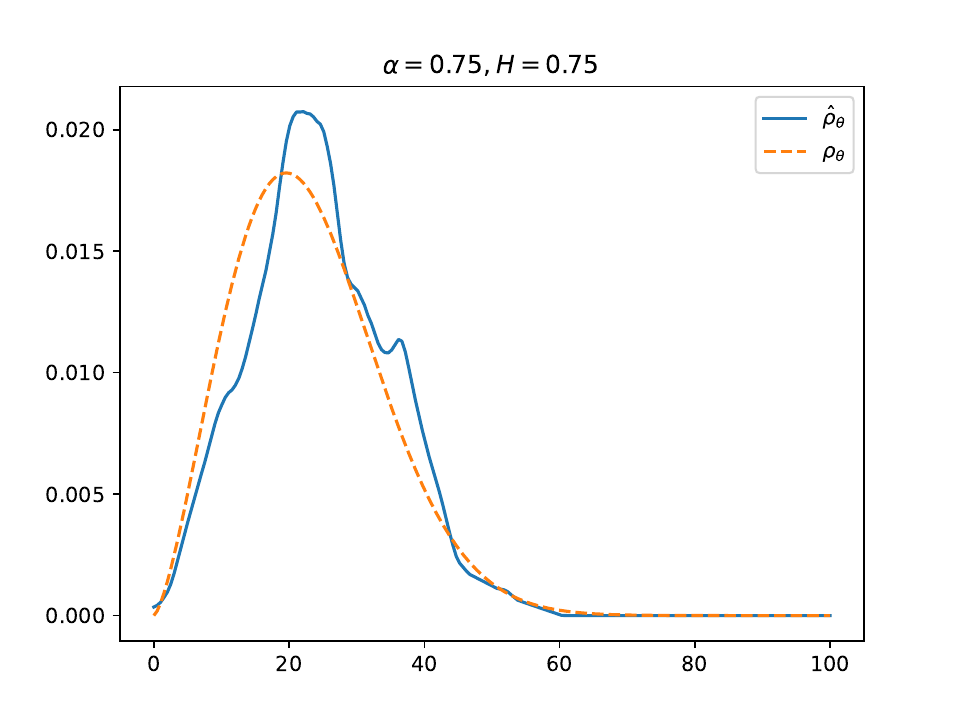}
		\caption{Smoothed by $w^{(2)}, v^{(2)}$}
	\end{subfigure}
	\caption{Spectral density estimation for the mollified process for the cases $(\alpha,H)=(0.75,0.75)$. Estimation is based on a single path realization of $X^H$ equidistantly sampled with $\delta = 0.01$ and $n=10000$. }
	\label{fig: small alpha}
\end{figure}

\subsection{Gamma distribution based estimation of $\alpha$ and $H$}
We now consider the case when it is explicitly known that the sample path belongs to a real HFSM. 
As mentioned before, errors in the estimates $\hat\rho_{\theta_i}$, $i=1,2$, directly translate to the regression estimator for $\alpha$. The advantage of estimating the parameters of the Gamma distribution directly, from which we derived estimators for the stability index $\alpha$ and Hurst parameter $H$ in Corollary \ref{corollary hfsm mollified} and \ref{cor: hfsm consistent alpha and H}, lies in avoiding the need of  kernel density estimation and the errors that come along with it as mentioned above. 
On the other hand, for the parameter estimates of the Gamma distribution to work well a large sample size $N$ of estimated frequencies $\{\hat{Z}_{k,n}\}_{k=1}^N$ is needed, which in turn requires the sample size $n$ of path sample points to be large. 
Therefore, for the next computations we only consider the case $n=10000$ with $N=100,150,200$. 

Table \ref{table3} shows the mean bias of the estimators $\tilde{\alpha}$ and $\tilde{H}$ based on Gamma distribution parameter estimation.
Again, we consider 1000 simulations of a single path of a real HFSM $X^H$ with $\alpha \in \{0.75,1.5\}$ and $H\in\{0.25,0.75\}$, sampled equidistantly with mesh size $\delta=0.01$ and sample size $n=10000$. 
The paths are smoothed with $v^{(i)}$, $i=1,2$, of Example \ref{numex 1} and $N = 100, 150, 200$ frequencies of the underlying LePage series are estimated using the periodogram of the mollified sample.  
\begin{table}[h]
	\centering
	\renewcommand{\arraystretch}{1.25}
	\setlength{\tabcolsep}{3pt}
	\begin{tabular}{|c|c|c||ccc|ccc|}
		\hline
		\multirow{4}{*}{\shortstack{Mean bias \\$\tilde{\alpha}-\alpha$,\\$\tilde{H}-H$}}&\multirow{4}{*}{$\alpha$}&$\delta$&\multicolumn{6}{c|}{$0.01$}\\
		\cline{3-9}
		&&&\multicolumn{3}{c|}{$\tilde{\alpha}$}&\multicolumn{3}{c|}{$\tilde{H}$}\\
		\cline{3-9}
		&&\multirow{2}{*}{\diagbox{$H$}{$N$}}&\multirow{2}{*}{100}&\multirow{2}{*}{150}&\multirow{2}{*}{200}&\multirow{2}{*}{100}&\multirow{2}{*}{150}&\multirow{2}{*}{200}\\
		&&&&&&&&\\
		\hline\hline
		\multirow{4}{*}{$w^{(1)}_{20}$}&\multirow{2}{*}{$0.75$}&$0.25$&0.262&0.255&0.251&0.277&0.272&0.263\\
		&&$0.75$&0.290&0.288&0.285&-0.143&-0.137&-0.139\\
		\cline{2-9}
		&\multirow{2}{*}{$1.5$}&$0.25$&0.070&0.094&0.058&0.150&0.102&0.051\\
		&&$0.75$&0.066&0.086&0.054&0.012&0.020&-0.006\\
		\hline
		\multirow{4}{*}{$w^{(2)}_{20}$}&\multirow{2}{*}{$0.75$}&$0.25$&0.255&0.244&0.236&0.252&0.261&0.257\\
		&&$0.75$&0.281&0.283&0.259&-0.172&-0.147&-0.171\\
		\cline{2-9}
		&\multirow{2}{*}{$1.5$}&$0.25$&0.081&0.104&0.061&0.198&0.152&0.096\\
		&&$0.75$&0.093&0.099&0.067&-0.072&-0.051&-0.056\\
		\hline
	\end{tabular}
	\caption{Mean bias of the estimators $\hat\alpha$ to $\alpha$ based on on the log-transformed ratio $\hat\rho_{\theta_1}/\hat\rho_{\theta_2}$ with $\theta_1=20$, $\theta_2 = 30$. Computations are based on $1000$ single path simulations and spectral density estimation for each path, respectively. Each path is sampled equidistantly with mesh size $\delta = 0.01$ and sample size $n=1000$ and $n=10000$, respectively. }
	\label{table3}
\end{table}

The estimators perform well in the case $\alpha = 1.5$ for all $H\in\{0.25,0.75\}$. 
As expected, the bias of $\tilde{\alpha}$ and $\tilde{H}$ decreases with increasing number of frequencies $N$ fed to the estimators. 
For small $\alpha$, i.e. $\alpha=0.75$ in our case, we see that both estimators for $\alpha$ and $H$ have difficulty assessing their true counterparts. 
This is mainly due to the same issue we encounter in spectral density estimation for small $\alpha$, meaning that a handful amplitudes are dominating over the rest since the sequence $\{\Gamma_k^{-1/\alpha}\}_{k=1}^\infty$ decays faster for smaller $\alpha$. 

\begin{rem}
	\label{rem: choice p q}
	By \cite[Theorem 3.2]{gammadistr} and the delta method it holds that the estimators $\tilde{b},\tilde{r}$ are asymptotically normal with asymptotic covariance matrix 
	\begin{align*}
		\Sigma = \left(\begin{matrix}
			\sigma_b^2&\sigma_{br}\\\sigma_{br} & \sigma_r^2
		\end{matrix}\right),
	\end{align*}
	where 
	\begin{align*}
		\sigma_b^2 = b^2\left(1+b\Psi_1(1+b)\right),\quad
		\sigma_r^2 = r^2\left(1+b\Psi_1(b)\right),\quad
		\sigma_{br} =  -b r \left(1+b\Psi_1(1+b)\right),
	\end{align*}
	and $\Psi_1(x)=d^2/dx^2\log(\Gamma(x))$ is the trigamma function. 
	Aiming to minimize the Frobenius norm of $\Sigma$, given by 
	\begin{align*}
		\Vert\Sigma\Vert = \sqrt{b^4\left(1+b\Psi_1(1+b)\right)^2
			+r^4\left(1+b\Psi_1(b)\right)^2
			+2b^2 r^2 \left(1+b\Psi_1(1+b)\right)^2
		},
	\end{align*}
	we first see that it is monotonically increasing in $b,r$.
	From Corollary \ref{corollary hfsm mollified} we know that 
	\begin{align*}
		b = \frac{\alpha}{q}(p-H),\quad r  = \frac{\alpha}{\theta^q}, \qquad p,q\geq 1,
	\end{align*}
	which suggests that, in theory, $p\geq 1$ should be chosen as small as possible, i.e. $p=1$, and $q\geq1$ as large as possible, leading to no immediate practical choice of $p,q$.
	
	In practice however, due to problems in estimating frequencies close to $0$, the condition $\alpha(p-H)-1>0$ is favorable so that the spectral density $\rho_\theta$ of the mollified process $\tilde{X}_\theta$ in Corollary \ref{corollary hfsm mollified} vanishes at the origin, cf. Figure \ref{fig: small alpha}. This indicates the lower bound $p\geq \max\{1,1/\alpha + H\}$. 
	A more conservative bound is given by $p\geq 1/\alpha+1$, since $H\in(0,1)$.
	On the other hand, choosing $p$ too conservatively, i.e. too large, results in fluctuations of $v=\mathcal{F}^{-1}w$ at its tails, see Figure \ref{fig: pq} (a).
	
	Furthermore, concerning the choice of the parameter $q\geq 1$, note that $q=2$ allows for the explicit computation of the smoothing kernel $v=\mathcal{F}^{-1}w$, see Example \ref{numex 1}. 
	In general, the function $v$ cannot be given for $q>2$. 
	Exceptions are the cases with even integers $q>2$, for which $v$ can be expressed as a sum of generalized hypergeometric functions. 
	Numerical computations show that for $w(x)=x^2e^{-x^q}$, $q>2$, the function $v=\mathcal{F}^{-1}w$ exhibits stronger fluctuations at its tails and slower absolute decay compared to the case $q=2$, compare Figure \ref{fig: pq} (b).
	
	Recall that $v_\theta$ needs to be discretized at the points $\{\delta(-L/2+j)\}_{j=0}^L$ for the discrete convolution in the computation of $\tilde{X}_\theta$.
	The mesh size $\delta$ and the number of convolution point $L$ (which depends on the sample size $n$ by $n-L>n_0$ with minimal sample size $n_0$ for the mollified process) dictate the choice of the parameters $\theta$, $p$ and $q$ such that $\vert v_\theta(t)\vert<\varepsilon$ for $t\not\in [-\delta L/2,\delta L/2]=I$ and some small error $\varepsilon>0$. Additionally,
	the quadrature error of the discrete convolution \eqref{eq: discrete convolution} should be minimal. 
	
	Let $v\in C^{k+2}([a,b])$ be an arbitrary function, and denote by $I_k(v)$, $k\in\mathbb{N}_0$, the \emph{Newton-Cotes quadrature} for equidistant discretization points, which generalizes common quadrature formulae such as the rectangle ($k=0$), trapezoid($k=1$) or Simpson formula ($k=2$). The exact integral of $v$ over $[a,b]$ is denoted by $I(v)$. Then, the quadrature error for the discretization of $v_\theta(x)=v(\theta x)$ is given by
	\begin{align*}
		E_k(v) = \left\vert I(v)-I_k(v)\right\vert = \begin{cases}
			C_k v_\theta^{(k+2)}(\xi)=\theta^{k+2}C_k v^{(k+2)}(\xi),& k\text{ even},\\
			C'_k v_\theta^{(k+1)}(\xi)=\theta^{k+1}C'_k v^{(k+1)}(\xi),&k\text{ odd},
		\end{cases}
	\end{align*}
	where $\xi\in[a,b]$ and $C_k,C'_k$ are constants given in \cite[Theorem 9.2]{numana}.
		
	Slower decay and fluctuations at the tail of $v_\theta$ for larger $p$ and $q$ require a larger choice of $\theta$ for $v_\theta$ to be negligible outside of the interval $I$. 
	The above quadrature error indicates that larger values of $\theta$ increase the quadrature error, which directly translates to the discrete convolution. 
	This is due to the fluctuations of $v_\theta$ being compressed onto the interval $I$ by scaling with $\theta$.
	The inaccuracy of the discretization can only be mitigated by a smaller mesh size $\delta$. 
	Hence, in practice the discrete convolution for the computation of $\tilde{X}$ works best with $p=2,4$ and $q=2$ under the given values of $\alpha, \delta$ and $L$.
	\begin{figure}[h]
		\centering
		\begin{subfigure}
			{0.49\textwidth}
			\includegraphics[width=\textwidth]{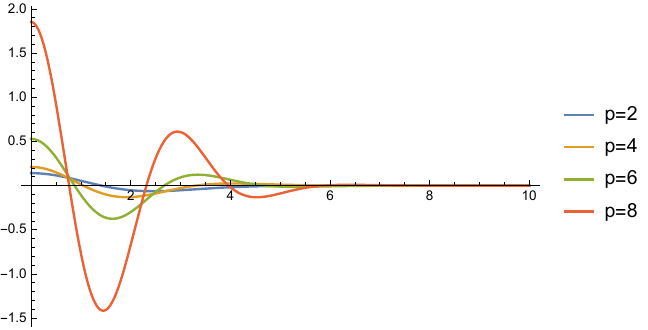}
			\caption{The smoothing function $v=\mathcal{F}^{-1}w$ with $w(x)=x^pe^{-x^2}$, $p=2,4,6,8$.}
		\end{subfigure}
		\begin{subfigure}{0.49\textwidth}
			\includegraphics[width=\textwidth]{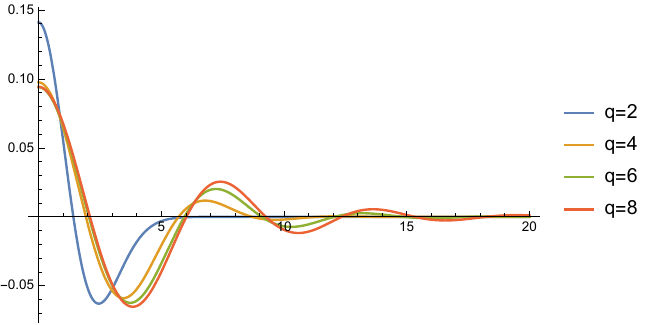}
			\caption{The smoothing function $v=\mathcal{F}^{-1}w$ with $w(x)=x^2e^{-x^q}$, $q=2,4,6,8$.}
		\end{subfigure}
		\caption{Analysis of $v=\mathcal{F}^{-1}w$ with $w(x)=x^pe^{-x^q}$, $p,q\geq 1$ even. }
		\label{fig: pq}
	\end{figure}
\end{rem}

\section{Conclusion}
\label{section: conclusion}

Real harmonizable fractional stable motions are a natural extension of the harmonizable representation of fractional Brownian motions to the stable regime. 
Opposed to linear fractional stable motions real harmonizable fractional stable motions are non-ergodic deeming standard statistical methods that rely on the law of large numbers for consistency unfeasible. 

We consider the more general class of stationary-increment harmonizable stable processes introduced in Equation \eqref{statincrementSAS} and give conditions on the integrability of the path of such a process with respect to the measure $v(t)dt$ with density $v\in L^1(\mathbb{R})\cap L^2(\mathbb{R})$. 
Exploiting the integral representation of a stationary-increment harmonizable stable process $X$ and the Fourier transform on the real line, interchanging the order of integration of the convolution $\tilde{X}_\nu(s)=\int_\mathbb{R}X(t)v(t-s)dt$ allows us to give conditions on $w = \mathcal{F}v$ in Theorem \ref{thm: mollify} such that the mollified process $\tilde{X}_\nu$ is a stationary real harmonizable $S\alpha S$ process with integrable spectral density. 

Applying periodogram frequency estimation as in \cite{viet2} yields estimates for the i.i.d. random frequencies of the LePage series representation of the mollified process $\tilde{X}$. 
Kernel density estimation allows to assess for the normalized spectral density of $\tilde{X}_\nu$ from which we derive methods to estimate the index of stability as well as the integral kernel $\Psi$ of the original stationary-increment harmonizable process $X$ up to a constant factor, see Corollary \ref{cor: consistent alpha} and Remark \ref{rem: psi}.

When considering real harmonizable fractional stable motions, the explicit form of $\Psi$ can be taken advantage of. For a specific choice of $w$ and $v$ given in Corollary \ref{corollary hfsm mollified} the mollified process has underlying random frequencies whose $p$-th absolute powers ($p$ depending on the choice of the weight function $w$) are i.i.d. Gamma distributed. The shape and scale parameters of this Gamma distribution can be estimated consistently enabling us to derive consistent estimators for the index of stability $\alpha$ as well as the Hurst parameter $H$. 

In an extensive simulation study we considered 1000 single path realizations of a real harmonizable fractional stable motion. Each path is then smoothed by example functions $v^{(1)}$ and $v^{(2)}$ with different tuning parameteres $\theta_1,\theta_2$, yielding sample paths of a mollified process, which we showed to be a stationary real harmonizable $S\alpha S$ process with finite control measure. 
We first assume that only the information that all paths are sampled from a stationary-increment harmonizable stable process is given. We estimate the stability index $\alpha$ based on a regression of the log-transformed ratio of two spectral densities from the same path and the same smoothing function $v^{(i)}$ but different tuning parameters $\theta_1,\theta_2$ as described in Section \ref{section: stat incr proc}.
From the estimates of the spectral density and index of stability it is then possible to reconstruct the kernel in the integral representation of a stationary increment harmonizable process up to a constant factor. 

Lastly, provided that the process is a real harmonizable fractional stable motion, we can estimate $\alpha$ and the Hurst parameter $H$ as described in Section \ref{section: fractionalmotion}. 
This approach eliminates problems that might arise from inaccurate kernel density estimation, but requires a large number of estimated frequencies $N$ from the periodogram, which in turn dictates larger sample sizes $n$. 

In our experiments, all estimation procedures work well for larger $\alpha$, i.e. for $\alpha = 1.5$. 
When the index of stability $\alpha$ is small, estimating the underlying random frequencies via the locations of the peaks in the periodogram is difficult. In this case a small number of amplitudes $R_k$, recall Equation \eqref{amplitudes R}, dominate the rest since the decay of $\Gamma_k^{-1/\alpha}$ as $k\rightarrow\infty$ is faster for small values of $\alpha$. 
The periodogram will exhibit strong sidelobe oscillations at the frequencies corresponding to these few dominating amplitudes. The sidelobes might not be completely filtered out during the iterative computation of the frequencies which can hinder the precise estimation of the true underlying frequencies $\vert Z_{[k]}\vert$. 
Smoothing the periodogram or considering different window functions can reduce sidelobe effects but at the cost of precision for the frequency estimation. 
Other sources of error lie in the discretization in the simulation of the samples as well as the discrete convolution. 

Summarizing, our proposed methods achieve accurate results as long as the underlying frequency estimation is accurate. 
All methods are computationally efficient since discrete convolution and periodogram computation with the fast Fourier transform are computationally inexpensive. 
Most notably, frequency estimation via the periodogram circumvents the problem of non-ergodicity. 
Results can be improved if the frequency estimation becomes more accurate. 
%
%\clearpage

%% -----------------------------------------------------------
%% The Appendices part is started with the command \appendix;
%% appendix sections are then done as normal sections
\appendix

%% References
%%
%% Following citation commands can be used in the body text:
%% Usage of \cite is as follows:
%%   \cite{key}         ==>>  [#]
%%   \cite[chap. 2]{key} ==>> [#, chap. 2]
%%

%% References with bibTeX database:

%\bibliographystyle{elsarticle-num}
% \bibliographystyle{elsarticle-harv}
% \bibliographystyle{elsarticle-num-names}
% \bibliographystyle{model1a-num-names}
% \bibliographystyle{model1b-num-names}
% \bibliographystyle{model1c-num-names}
% \bibliographystyle{model1-num-names}
% \bibliographystyle{model2-names}
% \bibliographystyle{model3a-num-names}
% \bibliographystyle{model3-num-names}
% \bibliographystyle{model4-names}
% \bibliographystyle{model5-names}
% \bibliographystyle{model6-num-names}

\bibliographystyle{abbrvnat}

\bibliography{bibliography}

\newpage
\section{On the non-ergodicity of stationary real harmonizable stable processes}
\label{appendix: ergodicity}

For the fundamentals on ergodic theory for stationary processes, in particular, basics such as measure-preserving transforms, invariant sets and Birkhoff's ergodic theorem, we refer to \cite[Chapter 10]{kallenberg}.
The $\sigma$-algebra of invariant sets of a stationary real harmonizable $S\alpha S$ process $X$ is given by $\sigma(\{\Gamma_k^{-1/\alpha}G_k^{(1)},\Gamma_k^{-1/\alpha}G_k^{(2)},Z_k\}_{k\in\mathbb{N}})$, where the sequences of random variables $\{\Gamma_k\}_{k\in\mathbb{N}}$, $\{G_k^{(j)}\}_{k\in\mathbb{N}}$, $j=1,2$, and $\{Z_k\}_{k\in\mathbb{N}}$ are the same as in the LePage series representation \eqref{SRH-LePage} \cite[Theorem 3]{viet2}.

From its series representation \eqref{SRH-LePage} it becomes immediately apparent that the process $X$ is stationary Gaussian conditional on the sequences $\{\Gamma_k\}_{k\in\mathbb{N}}$ and $\{Z_k\}_{k\in\mathbb{N}}$. 
In fact, conditional on $\{\Gamma_k\}_{k\in\mathbb{N}}$ and $\{Z_k\}_{k\in\mathbb{N}}$, it is a so-called Gaussian harmonic process, which is a series of sinusoidal waves with (conditional) Gaussian amplitudes and frequencies $\{Z_k\}_{k\in\mathbb{N}}$ with (conditional) covariance function 
\begin{align*}
	\mathbb{E}\left[X(s)X(t)\mid \{\Gamma_k\}_{k\in\mathbb{N}},\{Z_k\}_{k\in\mathbb{N}}\right]=C_\alpha^2m(\mathbb{R})^{2/\alpha}\sum_{k=1}^\infty \Gamma_k^{-2/\alpha}\cos\left((t-s)Z_k\right),
\end{align*}
compare \cite[Proposition 6.6.4]{gennady}.
The spectral measure (from the Gaussian spectral theory, not to be mistaken for the spectral measure of a stable random vector) corresponding to the above covariance function is purely atomic and concentrated at the absolute frequencies $\{\vert Z_k\vert\}_{k\in\mathbb{N}}$.

\section{Proof of Theorem \ref{thm: mollify}}
\label{appendix: proof theorem mollify}

The proof of Theorem \ref{thm: mollify} requires the introduction of the so-called \emph{$\alpha$-sine transform} given by 
\begin{align*}
	\mathcal{T}_\alpha u (t)=\int_\mathbb{R}\left\vert\sin\left(tx\right)\right\vert^\alpha u(x)dx
\end{align*}
for $\alpha\in(0,2)$, which is well-defined for $u\in L^1(\mathbb{R},\min(1,\vert x\vert^\alpha)dx)$. 
It is easy to see that if $u$ is an even function, then $\mathcal{T}_\alpha u$ is even as well and $\mathcal{T}_\alpha u(0)=0$. 

\label{appendix: lemma T_alpha}
The following lemma describes the behavior of $\mathcal{T}_\alpha u$ at the origin and its tails, which will be useful for the proof of Theorem \ref{thm: mollify}. 
We say that two functions $f(x)$ and $g(x)$ are asymptotically equivalent as $x\rightarrow x_0$ if $\lim_{x\rightarrow x_0}f(x)/g(x)=1$ and write $f(x)\sim g(x)$ as $x\rightarrow x_0$.
\begin{lem}
	\label{lem: behavior T_alpha}
	Let $\alpha\in(0,2)$ and $u\in L^1(\mathbb{R},\min(1,\vert x\vert^\alpha)dx)$ be even. 
	Then,
	\begin{enumerate}[(i)]
		\item \label{lemma 3i}$\mathcal{T}_\alpha u(t)\sim \vert t\vert^\alpha$ as $  t \rightarrow 0$. 
		\item \label{lemma 3ii}$\mathcal{T}_\alpha \vert u\vert (t)\sim  \vert t\vert^{\alpha+\beta-1}$ as $\vert t\vert\rightarrow\infty$ for some $\beta<1$. 
		\item \label{lemma 3iii}If $u\in L^1(\mathbb{R})$, then $\mathcal{T}_\alpha u$ is bounded by $\Vert u\Vert_1$. Furthermore, $\mathcal{T}_\alpha u(t)\rightarrow c_0(\alpha)\Vert u\Vert_1$ as $\vert t\vert\rightarrow\infty$, where the constant $c_0(\alpha)$ is given by $c_0(\alpha)=2^{-\alpha}\Gamma(1+\alpha)/\Gamma(1+\alpha/2)^2$.
		
	\end{enumerate}
\end{lem}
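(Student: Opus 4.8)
The plan is to prove the three asymptotics separately; in each case I would reduce to $t>0$, since $u$ and hence $\mathcal{T}_\alpha u$ are even, and exploit the homogeneity of $|\sin(tx)|^\alpha$ under the substitution $y=tx$.

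For part~(\ref{lemma 3i}) I would divide by $|t|^\alpha$ and write
$\mathcal{T}_\alpha u(t)/|t|^\alpha=\int_\mathbb{R}|\sin(tx)/t|^\alpha u(x)\,dx$,
whose integrand converges pointwise to $|x|^\alpha u(x)$ as $t\to0$ because $\sin(tx)/(tx)\to1$. On $\{|x|\le1\}$ the integrand is dominated by the integrable function $|x|^\alpha|u(x)|$ (the weight $\min(1,|x|^\alpha)$ equals $|x|^\alpha$ there), so dominated convergence yields $\int_{|x|\le1}|x|^\alpha u(x)\,dx$; on $\{|x|>1\}$ the bound $|\sin(tx)/t|^\alpha\le\min(|x|^\alpha,|t|^{-\alpha})$ together with monotone convergence gives $\int_{|x|>1}|x|^\alpha u(x)\,dx$. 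This identifies the limit as $\int_\mathbb{R}|x|^\alpha u(x)\,dx$, so that $\mathcal{T}_\alpha u(t)\sim\left(\int_\mathbb{R}|x|^\alpha u(x)\,dx\right)|t|^\alpha$. The step to watch is the tail $\{|x|>1\}$: the stated order $|t|^\alpha$ is the genuine leading order precisely when $\int_\mathbb{R}|x|^\alpha u(x)\,dx$ is finite, which is where the integrability hypotheses on $u$ enter.

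Part~(\ref{lemma 3iii}) is the analytic heart of the lemma. The uniform bound $|\mathcal{T}_\alpha u(t)|\le\|u\|_1$ is immediate from $|\sin|^\alpha\le1$. For the limit I would split the $\pi$-periodic factor as $|\sin y|^\alpha=c_0(\alpha)+g(y)$, where $c_0(\alpha)=\frac1\pi\int_0^\pi|\sin\theta|^\alpha\,d\theta$ is its mean value and $g$ is $\pi$-periodic with zero mean, so that $\mathcal{T}_\alpha u(t)=c_0(\alpha)\int_\mathbb{R}u(x)\,dx+\int_\mathbb{R}g(tx)u(x)\,dx$. The remainder vanishes as $|t|\to\infty$ by a generalized Riemann--Lebesgue argument: since the cusps of $|\sin|^\alpha$ force its Fourier coefficients to decay like $n^{-1-\alpha}$, the Fourier series of $g$ converges absolutely, and the remainder reduces to a dominated sum of terms $\mathcal{F}u(2nt)$, each tending to $0$ by the Riemann--Lebesgue lemma because $u\in L^1(\mathbb{R})$. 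Finally, evaluating $\int_0^\pi|\sin\theta|^\alpha\,d\theta$ as a Beta integral and applying the Legendre duplication formula identifies $c_0(\alpha)=2^{-\alpha}\Gamma(1+\alpha)/\Gamma(1+\alpha/2)^2$; for the nonnegative $u$ arising in the application $\int_\mathbb{R}u=\|u\|_1$.

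Part~(\ref{lemma 3ii}) treats the non-integrable regime, in which $u$ carries a power-type singularity of order $\alpha+\beta$ at the origin with $\beta<1$. Here the substitution $y=tx$ is decisive: rescaling the near-origin contribution $\int_{|x|\le1}|\sin(tx)|^\alpha|u(x)|\,dx$ extracts the homogeneity factor $|t|^{\alpha+\beta-1}$, and the resulting rescaled integral converges to a finite constant because its integrand is integrable both at the origin (as $\alpha+\beta<1+\alpha$) and at infinity (where $|\sin y|^\alpha$ averages to $c_0(\alpha)$ and $\alpha+\beta>1$). The part of $u$ away from the origin is in $L^1$ and contributes only $O(1)$ by the reasoning of part~(\ref{lemma 3iii}), hence is of lower order since $\alpha+\beta-1>0$. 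Across all three parts the main obstacle is the same: controlling the rapidly oscillating factor $|\sin(tx)|^\alpha$ — extracting its mean value $c_0(\alpha)$ and showing that its oscillations average out against an $L^1$ density — which is exactly the generalized Riemann--Lebesgue step in (\ref{lemma 3iii}) and the convergence of the rescaled integral in (\ref{lemma 3ii}).
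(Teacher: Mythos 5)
Your proposal is correct and follows essentially the same route as the paper: part~(i) from the small-angle behaviour of $\sin(tx)$, part~(ii) from the substitution $y=tx$ combined with power-law behaviour of $\vert u\vert$ at the origin, and part~(iii) from the bound $\vert\sin\vert^\alpha\le 1$ together with an expansion of the periodic function $\vert\sin\vert^\alpha$ into its mean $c_0(\alpha)$ plus oscillations that are killed by Riemann--Lebesgue. The one structural difference is in (iii): the paper invokes a ready-made series representation $\mathcal{T}_\alpha u(t)=c_0\mathcal{F}u(0)+2\sum_{k\ge1}c_k\mathcal{F}u(2kt)$ with absolutely summable coefficients, cited from an earlier work, whereas you rederive exactly this expansion as the Fourier series of $\vert\sin\vert^\alpha$ (with coefficient decay $n^{-1-\alpha}$) and verify the constant via a Beta integral and duplication formula; the two arguments are the same, yours being self-contained. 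A genuine merit of your write-up is that you surface hypotheses the paper leaves tacit: that the limit constant in (i) is $\int_{\mathbb{R}}\vert x\vert^\alpha u(x)\,dx$ and the stated order is only meaningful when this is finite (the weight $\min(1,\vert x\vert^\alpha)$ alone does not guarantee it), that (ii) presupposes a genuine power-type singularity $\vert u(x)\vert\sim\vert x\vert^{-\alpha-\beta}$ with $\alpha+\beta>1$ (the non-integrable regime --- without this, (ii) would contradict the positive limit in (iii)), and that $\mathcal{F}u(0)=\Vert u\Vert_1$ in (iii) requires $u\ge0$. One minor slip: your appeal to monotone convergence on $\{\vert x\vert>1\}$ in (i) is not literally valid, since $\vert\sin(tx)/t\vert^\alpha$ is not monotone in $t$; dominated convergence (when the tail integral is finite) or Fatou's lemma (when it is not) is the correct tool, but this does not affect the conclusion.
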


\begin{proof}
\begin{enumerate}[(i)]
	\item Follows directly from the definition of $\mathcal{T}_\alpha$ as $\sin(t)\sim \vert t\vert$ as $ t\rightarrow0$. 
	\item Since $\mathcal{T}_\alpha u$ is even, consider just $t>0$. The substitution $y=tx$ in the definition of $\mathcal{T}_\alpha \vert u\vert$ yields 
	\begin{align*}
		\mathcal{T}_\alpha\vert u\vert (t)=\int_\mathbb{R}\vert\sin(y)\vert^\alpha\left\vert u\left(\frac{y}{t}\right)\right\vert t^{-1}dy.
	\end{align*}
	The behavior of $\mathcal{T}_\alpha \vert u\vert(t)$ as $t\rightarrow\infty$ is therefore determined by the behavior of $\vert u(x)\vert$ as $x\rightarrow 0$. 
	Since $u\in L^1(\mathbb{R},\min(1,\vert x\vert^\alpha)dx)$ needs to hold for $\mathcal{T}_\alpha u$ to be well-defined, it follows that $\vert u(x) \vert x^\alpha\sim  x ^{-\beta}$ as $x\rightarrow 0$ for some $\beta<1$, thus $\vert u(x)\vert \sim x^{-\alpha-\beta}$ as $x\rightarrow 0$.
	It follows that 
	\begin{align*}
		\left\vert u\left(\frac{y}{t}\right)\right\vert t^{-1}\sim \left(\frac{y}{t}\right)^{-\alpha-\beta} t^{-1}= yt^{\alpha+\beta-1} 
	\end{align*}
	as $t\rightarrow\infty$ and consequently as $\mathcal{T}_\alpha \vert u\vert (t)\sim t^{\alpha+\beta-1}$ as $t\rightarrow\infty$ for some $\beta<1$. 
	\item By the triangle inequality we have
	\begin{align*}
		\left\vert \mathcal{T}_\alpha u(t)\right\vert\leq\int_\mathbb{R}\left\vert\sin(tx)\right\vert^\alpha \vert u (x)\vert dx\leq\int_\mathbb{R}\left\vert u(x)\right\vert dx =\Vert u\Vert_1.
	\end{align*}
	Furthermore, for $u\in L^1(\mathbb{R})$ the $\alpha$-sine transform admits the series representation
	\begin{align*}
		\mathcal{T}_\alpha u(t)=c_0\mathcal{F}u(0)+2\sum_{k=1}^\infty c_k\mathcal{F}u(2kt),
	\end{align*}
	with coefficients $$c_k = c_k(\alpha)=\frac{(-1)^k}{2^\alpha}\frac{\Gamma(1+\alpha)}{\Gamma(1+\alpha/2-k)\Gamma(1+\alpha/2+k)},$$ see \cite[Theorem 2]{viet}. 
	Additionally, the series $\sum_{k=1}^\infty c_k$ converges absolutely to $c_0/2$ \cite[Corollary 2]{viet}. The Fourier transform $\mathcal{F}u$ is bounded by $\Vert u\Vert_1$ and vanishes at its tails.
	Also, $\mathcal{F}u(0)=\Vert u\Vert_1$ holds. 
	Hence, the dominated convergence theorem yields $\mathcal{T}_\alpha u(t)\rightarrow c_0\Vert u\Vert_1$ as $t\rightarrow\infty$ and $c_0$ can be computed from the above expression for the coefficients $c_k$, i.e. 
	\begin{align*}
		c_0 = 2^{-\alpha}\frac{\Gamma\left(1+\alpha\right)}{\Gamma\left(1+\frac{\alpha}{2}\right)^2}. 
	\end{align*}
\end{enumerate}
\end{proof}

\noindent We now turn to the proof of Theorem \ref{thm: mollify}. 
\begin{proof}[Proof of Theorem \ref{thm: mollify}]
	In general, necessary and sufficient conditions for the $p$-integrability of $\alpha$-stable processes with integral representation $X(t)=\int_{E}h_t(x)M_\alpha(dx)$ are given in \cite[Theorem 3.3]{integrability}, see also \cite[Chapter 11.3]{gennady}. 
	Setting $p=1$ and $h_t(x)=(e^{itx}-1)\Psi(x)$ yields that 
	\begin{align*}
		\int_{\mathbb{R}}\vert X(t)\vert v(t)dt<\infty\quad a.s.
	\end{align*}
	if and only if one of the following conditions is met (depending on $\alpha$):
	\begin{enumerate}[(i)]
		\item Case $\alpha\in(0,1)$:
		\begin{align}
			\label{alphaless1}
			\int_\mathbb{R}\left(\int_{\mathbb{R}}\left\vert\left(e^{itx}-1\right)\Psi(x)\right\vert v(t)dt\right)^\alpha dx&<\infty.
		\end{align}
		\item Case $\alpha\in(1,2)$:
		\begin{align}
			\label{alphagreater1}
			\int_\mathbb{R}\left(\int_{\mathbb{R}}\left\vert\left(e^{itx}-1\right)\Psi(x)\right\vert^\alpha dx\right)^{1/\alpha}v(t)dt&<\infty.
		\end{align}
		\item Case $\alpha=1$:
		\begin{align}
			\label{alpha1}
			&\int_\mathbb{R}\left(\int_{\mathbb{R}}\left\vert\left(e^{itx}-1\right)\Psi(x)\right\vert\right.\\ 
			&\hspace{5ex}\times\left.\left[1+\log_+\left(\frac{\left\vert\left(e^{itx}-1\right)\Psi(x)\right\vert\int_\mathbb{R}\int_\mathbb{R}\left\vert\left(e^{isy}-1\right)\Psi(y)\right\vert v(s)dsdy}
			{\left(\int_\mathbb{R}\left\vert\left(e^{ity}-1\right)\Psi(y)\right\vert dy\right)\left(\int_\mathbb{R}\left\vert\left(e^{isx}-1\right)\Psi(x)\right\vert v(s)ds\right)}\right)
			\right]dx\right)v(t)dt
			<\infty,\nonumber
		\end{align}
		where $\log_+(x)=\max\{0,\log(x)\}$.
	\end{enumerate}
	Rewriting
	$
	\vert(e^{itx}-1)\Psi(x)\vert=2\left\vert\sin\left(tx/2\right)\right\vert\left\vert\Psi(x)\right\vert
	$
	for all $t,x\in\mathbb{R}$ we get:
	\begin{enumerate}[(i)]
		\item Case $\alpha\in(0,1)$:
		\begin{align*}
			\int_\mathbb{R}\left(\int_{\mathbb{R}}\left\vert\left(e^{itx}-1\right)\Psi(x)\right\vert v(t)dt\right)^\alpha dx
			&=2^\alpha\int_\mathbb{R}\left(\int_\mathbb{R}\left\vert\sin\left(\frac{tx}{2}\right)\right\vert v(t)dt\right)^\alpha\left\vert\Psi(x)\right\vert^\alpha dx\\
			&=2^\alpha\int_{\mathbb{R}}\left(\mathcal{T}_1v\left(\frac{x}{2}\right)\right)^\alpha\left\vert\Psi(x)\right\vert^\alpha dx<\infty,
		\end{align*}
		since $\mathcal{T}_1v(x)\sim\vert x\vert$ as $x\rightarrow 0$, which implies $(\mathcal{T}_1v(x))^\alpha\sim\vert x\vert^\alpha$ as $x\rightarrow 0$, and $\mathcal{T}_1v(x)\rightarrow c_0(1)\Vert v\Vert_1$ as $\vert x\vert\rightarrow\infty$ for $v\in L^1(\mathbb{R})$ by Lemma \ref{lem: behavior T_alpha}(\ref{lemma 3i}) and (\ref{lemma 3iii}). 
		The finiteness of the above integral then follows from the fact that $\Psi$ lies in the space $L^\alpha(\mathbb{R},\min(1,\vert x\vert^\alpha)dx)$.
		
		\item Case $\alpha\in(1,2)$:
		\begin{align*}
			\int_\mathbb{R}\left(\int_{\mathbb{R}}\left\vert\left(e^{itx}-1\right)\Psi(x)\right\vert^\alpha dx\right)^{1/\alpha}v(t)dt
			&=2\int_\mathbb{R}\left(\int_\mathbb{R}\left\vert\sin\left(\frac{tx}{2}\right)\right\vert^\alpha\left\vert\Psi(x)\right\vert^\alpha dx\right)^{1/\alpha}v(t)dt\\
			&=2\int_\mathbb{R} \left(\mathcal{T}_\alpha\vert\Psi\vert\left(\frac{t}{2}\right)\right)^{1/\alpha}v(t)dt<\infty.
		\end{align*}
		By Lemma \ref{lem: behavior T_alpha}(\ref{lemma 3ii}) it holds that $\mathcal{T}_\alpha \vert\Psi\vert(t)\sim\vert t\vert^{\alpha+\beta-1}$, $\beta<1$, as $\vert t\vert\rightarrow\infty$, hence
		\begin{align*}
			\left(\mathcal{T}_\alpha\vert\Psi\vert(t)\right)^{1/\alpha}\sim\vert t\vert^{1+(\beta-1)/\alpha}.
		\end{align*}
		Note that $1+(\beta-1)/\alpha<1$ as $\beta<1$, thus 
		\begin{align*}
			\int_\mathbb{R}\left(\mathcal{T}_\alpha\vert\Psi\vert\left(\frac{t}{2}\right)\right)^{1/\alpha}v(t)dt<\infty
		\end{align*}
		if $v\in L^1(\mathbb{R},\max(1,\vert t\vert)dt)$. 
		This is guaranteed since $v=\mathcal{F}^{-1}w$ is continuous, $v(0)=\Vert w\Vert_1/(2\pi)$ and $v(t)=\mathcal{F}^{-1}w(t)=o(\vert t\vert^{-3})$ as $\vert t\vert\rightarrow\infty$ due to $w\in W^{1,3}(\mathbb{R})$. 
		
		\item Case $\alpha=1$: Note that monotonicity of the logarithm and the triangle inequality allows us to bound the absolute value of the double integral in Equation \eqref{alpha1} from above by replacing $v$ with $\vert v\vert$. 
		Without loss of generality $v$ can therefore be assumed to be non-negative. 
		Also, note that $1+\log_+(z)\leq 1+z$ for all $z\geq 0$,
		which allows us to bound Equation \eqref{alpha1} from above by 
		\begin{align*}
			\int_\mathbb{R}\left(\int_\mathbb{R}\vert h_t(x)\vert dx\right)v(t)dt + \int_\mathbb{R}\left(\int_\mathbb{R}\vert h_t(x)\vert\frac{\vert h_t(x)\vert\int_\mathbb{R}\left(\int_\mathbb{R}\vert h_s(y)\vert v(s)ds\right)dy}{\left(\int_\mathbb{R}\vert h_t(y)\vert dy\right)\left(\int_\mathbb{R}\vert h_s(x)\vert v(s)ds\right)}dy\right)v(t)dt	
			= I_1 + I_2.
		\end{align*}
		%		where $h_t(x)=(e^{itx}-1)\Psi(x)$. 
		
		The first double integral is given by 
		\begin{align*}
			I_1=2\int_\mathbb{R}\left(\int_\mathbb{R}\left\vert\sin\left(\frac{tx}{2}\right)\right\vert\left\vert\Psi(x)\right\vert dx\right) v(t)dt=2\int_\mathbb{R}\mathcal{T}_1\vert\Psi\vert\left(\frac{t}{2}\right)v(t)dt<\infty,
		\end{align*}
		by the same argument as in the case $\alpha\in(1,2)$.
		
		We can simplify the expression inside the second double integral $I_2$ by 
		\begin{align*}
			&\vert h_t(x)\vert\frac{\vert h_t(x)\vert\int_\mathbb{R}\left(\int_\mathbb{R}\vert h_s(y)\vert v(s)ds\right)dy}{\left(\int_\mathbb{R}\vert h_t(y)\vert dy \right)\left(\int_\mathbb{R}\vert h_s(x)\vert v(s)ds\right)} 	\\
			&= 4 \left\vert\sin\left(\frac{tx}{2}\right)\right\vert^2 \vert \Psi(x)\vert^2
			\frac{2\int_\mathbb{R}\left(\int_\mathbb{R}\left\vert\sin\left(\frac{sy}{2}\right)\right\vert\vert\Psi(y)\vert v(s)ds\right)dy}{2\left(\int_\mathbb{R}\left\vert\sin\left(\frac{ty}{2}\right)\right\vert\vert\Psi(y)dy\right)\left(2\vert\Psi(x)\vert\int_\mathbb{R}\left\vert\sin\left(\frac{sx}{2}\right)\right\vert v(s)ds\right)}	\\
			&=\left\vert\sin\left(\frac{tx}{2}\right)\right\vert^2 \vert\Psi(x)\vert
			\frac{2\int_\mathbb{R}\mathcal{T}_1\vert\Psi\vert\left(\frac{s}{2}\right)v(s)ds}{\mathcal{T}_1\vert\Psi\vert\left(\frac{t}{2}\right) \mathcal{T}_1v\left(\frac{x}{2}\right)}
			=\left\vert\sin\left(\frac{tx}{2}\right)\right\vert^2 \vert\Psi(x)\vert
			\frac{I_1}{\mathcal{T}_1\vert\Psi\vert\left(\frac{t}{2}\right) \mathcal{T}_1v\left(\frac{x}{2}\right)},
		\end{align*}
		which yields
		\begin{align*}
			I_2 &= I_1\int_\mathbb{R}\left(\int_\mathbb{R}\left\vert\sin\left(\frac{tx}{2}\right)\right\vert^2\left\vert\Psi(x)\right\vert
			\frac{1}{\mathcal{T}_1\vert\Psi\vert\left(\frac{t}{2}\right)\mathcal{T}_1v\left(\frac{x}{2}\right)}dx\right)v(t)dt	\\
			&=I_1\int_\mathbb{R}\underbrace{\left(\int_\mathbb{R}\left\vert\sin\left(\frac{tx}{2}\right)\right\vert^2\left\vert\Psi(x)\right\vert \left(\mathcal{T}_1v\left(\frac{x}{2}\right)\right)^{-1} dx\right)}_{=:J(t)}\left(\mathcal{T}_1\vert\Psi\vert\left(\frac{t}{2}\right)\right)^{-1}v(t)dt.
		\end{align*}
		
		Note that 
		$
		\vert\sin\left(tx/2\right)\vert^2(\mathcal{T}_1v(x/2))^{-1}
		$
		is bounded for all $t,x\in\mathbb{R}$ and behaves asymptotically like $t^2\vert x\vert/4$ as $x\rightarrow 0$ for all $t\in\mathbb{R}$. Since $\Psi\in L^1(\mathbb{R},\min(1,\vert x\vert)dx)$, the function $J$ is well defined on $\mathbb{R}$. Moreover, it holds that $J(t)=O(t^2)$ as $t\rightarrow 0$ and for $t\rightarrow \infty$ the substitution $y=tx/2$ yields
		\begin{align*}
			J(t)=2\int_\mathbb{R}\left\vert\sin\left(y\right)\right\vert^2
			\underbrace{\left(\mathcal{T}_1v\left(\frac{y}{t}\right)\right)^{-1}}_{=O(\vert 1/t\vert^{-1})=O(\vert t\vert)}
			\underbrace{\left\vert\Psi\left(\frac{2y}{t}\right)\right\vert\frac{1}{\vert t\vert}}_{=O(\vert t\vert^{\alpha+\beta-1})=O(\vert t\vert^{\beta})} 
			dy
			=O\left(\vert t\vert^{1+\beta}\right).
		\end{align*} 
		Consequently, we have
		\begin{align*}
			J(t)\left(\mathcal{T}_1\vert\Psi\vert\left(\frac{t}{2}\right)\right)^{-1}
			=
			\begin{cases}
				O\left(t^2/\vert t\vert\right)=O\left(\vert t\vert\right),& t\rightarrow0, \\
				O\left(\vert t\vert^{1+\beta}/\vert t\vert^\beta\right)=O\left(\vert t\vert\right),& \vert t\vert\rightarrow\infty
			\end{cases}
		\end{align*}
		and 
		\begin{align*}
			\int_\mathbb{R}J(t)\left(\mathcal{T}_1\vert\Psi\vert\left(\frac{t}{2}\right)\right)^{-1}v(t)dt<\infty
		\end{align*}
		for $v\in L^1(\mathbb{R},\max(1,\vert t\vert)dt)$, which is again fulfilled due to $w\in W^{1,3}(\mathbb{R})$ as in the case $\alpha\in(1,2)$.
	\end{enumerate}
	
	Ultimately, the almost sure integrability in Equation \eqref{integrabilitycondition} follows under the given assumptions and is still guaranteed under scaling and translation of the function $v$.  
	Furthermore, it is easy to see that for the mollified process $\tilde{X}_\nu(s)=\int_\mathbb{R}X(t)v(t-s)dt$ we have 
	\begin{align*}
		\tilde{X}_\nu(s)
		%		=\int_\mathbb{R}X(t)v(t-s)dt
		=\int_\mathbb{R}Re\left(\int_\mathbb{Re}h_t(x)M_\alpha(dx)\right)v(t-s)dt
		=Re\left(\int_\mathbb{R}\left(\int_\mathbb{R}h_t(x)M_\alpha (dx)\right)v(t-s)dt\right)
	\end{align*}
	since $v$ is real valued. 
	Hence, applying \cite[Theorem 4.1]{integrability} yields
	\begin{align*}
		\tilde{X}(s)
		&\stackrel{a.s.}{=}Re\left(\int_{\mathbb{R}}\left(\int_{\mathbb{R}}\left(e^{itx}-1\right)\Psi(x)v\left(t-s\right)dt\right)M_{\alpha}(dx)\right)\\
		&=Re\left(\int_{\mathbb{R}}\Bigg(
		\underbrace{\int_{\mathbb{R}}e^{itx}v\left(t-s\right)dt}_{=e^{isx}\int_\mathbb{R}e^{itx}v( t)dt}
		-\underbrace{\int_{\mathbb{R}}v\left(t-s\right)dt}_{=w(0)=0}
		\Bigg)
		\Psi(x)M_{\alpha}(dx)\right)\\
		&=Re\left(\int_{\mathbb{R}}e^{isx}w\left(x\right)\Psi(x)M_{\alpha}(dx)\right).
	\end{align*}
	The random measure $\tilde{M}_\alpha (dx) = w\left(x\right)\Psi(x)M_{\alpha}(dx)$ is complex isotropic $S\alpha S$ with control measure $\tilde{m}(dx)=\left\vert w\left(x\right)\Psi(x)\right\vert^\alpha dx$. 
	The new control measure $\tilde{m}$ is finite since the function $w\Psi$ lies in the space $L^\alpha (\mathbb{R})$, which concludes this proof.
	
\end{proof}

\section{Additional tables}
\label{appendix tabfig}

\begin{table}[h]
	\centering
	\renewcommand{\arraystretch}{1.25}
	\setlength{\tabcolsep}{3pt}
	\begin{tabular}{|c|c|c||ccc|ccc|}
		\hline
		\multirow{4}{*}{\shortstack{Median squared $L^2$-dist.\\$\Vert\hat{\rho}-\rho\Vert_2^2$ ($\times 10^{-3}$)}}&\multirow{4}{*}{$\alpha$}&$\delta$&\multicolumn{6}{c|}{$0.01$}\\
		\cline{3-9}
		&&$n$&\multicolumn{3}{c|}{$10^3$}&\multicolumn{3}{c|}{$10^4$}\\
		\cline{3-9}
		&&\multirow{2}{*}{\diagbox{$H$}{$N$}}&\multirow{2}{*}{10}&\multirow{2}{*}{25}&\multirow{2}{*}{40}&\multirow{2}{*}{100}&\multirow{2}{*}{150}&\multirow{2}{*}{200}\\
		&&&&&&&&\\
		\hline\hline
		\multirow{4}{*}{$w^{(1)}_{20}$}&\multirow{2}{*}{$0.75$}&$0.25$&4.376&1.155&0.594&3.512&2.873&2.166\\
		&&$0.75$&5.156&1.719&2.132&4.022&3.198&2.468\\
		\cline{2-9}
		&\multirow{2}{*}{$1.5$}&$0.25$&1.236&0.510&1.126&0.446&0.278&0.214\\
		&&$0.75$&1.235&0.509&1.287&0.459&0.279&0.232\\
		\hline
		\multirow{4}{*}{$w^{(2)}_{20}$}&\multirow{2}{*}{$0.75$}&$0.25$&3.863&1.443&0.730&3.284&2.916&2.349\\
		&&$0.75$&4.301&1.400&0.674&3.317&2.829&2.341\\
		\cline{2-9}
		&\multirow{2}{*}{$1.5$}&$0.25$&1.270&0.452&1.038&0.465&0.284&0.213\\
		&&$0.75$&1.239&0.444&1.037&0.453&0.275&0.206\\
		\hline
	\end{tabular}
	\caption{Median squared $L^2$-distances between kernel density estimators $\hat\rho_\theta$ and normalized spectral density $\rho_\theta$ over the interval $[0,100]$. Same setting as in Table \ref{table1}.}
	\label{table1_median}
\end{table}

\begin{table}[h]
	\centering
	\renewcommand{\arraystretch}{1.25}
	\setlength{\tabcolsep}{3pt}
	\begin{tabular}{|c|c|c||ccc|ccc|}
		\hline
		\multirow{4}{*}{\shortstack{Number of estimates \\$\hat{\alpha}\not\in(0,2)$}}&\multirow{4}{*}{$\alpha$}&$\delta$&\multicolumn{6}{c|}{$0.01$}\\
		\cline{3-9}
		&&$n$&\multicolumn{3}{c|}{$10^3$}&\multicolumn{3}{c|}{$10^4$}\\
		\cline{3-9}
		&&\multirow{2}{*}{\diagbox{$H$}{$N$}}&\multirow{2}{*}{10}&\multirow{2}{*}{25}&\multirow{2}{*}{40}&\multirow{2}{*}{100}&\multirow{2}{*}{150}&\multirow{2}{*}{200}\\
		&&&&&&&&\\
		\hline\hline
		\multirow{4}{*}{$w^{(1)}_{20}$}&\multirow{2}{*}{$0.75$}&$0.25$&320&110&31&152&128&75\\
		&&$0.75$&344&119&53&204&143&91\\
		\cline{2-9}
		&\multirow{2}{*}{$1.5$}&$0.25$&364&28&4&159&79&29\\
		&&$0.75$&442&68&4&242&132&38\\
		\hline
		\multirow{4}{*}{$w^{(2)}_{20}$}&\multirow{2}{*}{$0.75$}&$0.25$&334&114&43&202&162&112\\
		&&$0.75$&329&112&35&179&140&90\\
		\cline{2-9}
		&\multirow{2}{*}{$1.5$}&$0.25$&396&154&2&235&186&128\\
		&&$0.75$&378&82&1&202&126&75\\
		\hline
		\hline
	\end{tabular}
	\caption{Supplementary table to Table \ref{table2}. Number of estimates $\hat\alpha\not\in(0,2)$.}
	\label{table2_add}
\end{table}

\end{document}